\newcommand{\defgl}{\mathrel{\mathop:}=}
\newcommand\eqclass[1]{\ensuremath{\llbracket #1\rrbracket}}
\newcommand{\N}{{\ensuremath{\mathbb{N}}}}
\DeclareMathOperator{\Anc}{Anc}
\newcounter{mycounter}[section]%
\newtheorem{theorem}[mycounter]{Theorem}
\newtheorem{lem}[mycounter]{Lemma}
\newtheorem{prop}[mycounter]{Proposition}
\newtheorem{defi}[mycounter]{Definition}
\newtheorem{kor}[mycounter]{Corollary}
\newtheorem{bsp}[mycounter]{Example}
\newtheorem{rem}[mycounter]{Remark}
\newtheorem*{assumpt}{Assumption}
\title{Martin boundary theory on inhomogenous fractals}
\author{%
Uta Freiberg\footnote{Institute for Stochastics and Applications, University of Stuttgart, Pfaffenwaldring 57, 70569 Stuttgart, Germany, E-mail: \href{mailto:Freiberg@mathematik.uni-stuttgart.de}{\texttt{Freiberg@mathematik.uni-stuttgart.de}}}, 
Stefan Kohl\footnote{Institute for Stochastics and Applications, University of Stuttgart, Pfaffenwaldring 57, 70569 Stuttgart, Germany, E-mail: \href{mailto:Stefan.Kohl@mathematik.uni-stuttgart.de}{\texttt{Stefan.Kohl@mathematik.uni-stuttgart.de}}}%
}
\begin{document}
\maketitle
\begin{abstract}
 We want to consider fractals generated by a probabilistic iterated function scheme with open set condition and we want to interpret the probabilities as weights for every part of the fractal. In the homogenous case, where the weights are not taken into account, Denker and Sato introduced in 2001 a Markov chain on the word space and proved, that the Martin boundary is homeomorphic to the fractal set. Our aim is to redefine the transition probability with respect to the weights and to calculate the Martin boundary. As we will see, the inhomogenous Martin boundary coincides with the homogenous case.
\end{abstract}
\bigskip

\textbf{2010 Mathematics Subject Classification:} 31C35, 28A80, 60J10.\\
\textbf{Keywords:} Martin boundary, Markov chains, Green function, fractals.\\
\bigskip
%
%
%
%
\section{Introduction}
The general idea of Martin boundary may be first introduced by Martin in \cite{Martin1941} and was then extended by Doob \cite{Doob1959} and Hunt \cite{Hunt1960} by investigating the behavior of Markov chains in limit. Further articles and books \cite{Doob1984, Dynkin1969, KSK1976, Sawyer1997, Woess2000, Woess2009} followed this idea and tied the connection to harmonic analysis. This comes from the fact, that a harmonic function $h$ on $\mathcal W$ has an integral representation by
\begin{equation}
 h(w)=\int_{\mathcal M} k(w,\xi)\mathrm d\mu(\xi), \qquad w\in\mathcal W\nonumber
\end{equation}
where $\mathcal M$ is the Martin boundary, $k$ the Martin kernel and $\mu$ a Borel measure.

Denker and Sato came up with the idea, to describe fractals through Martin boundary theory. They studied in several papers \cite{DS1999, DS2001, DS2002} the description of the Sierpi\'nski gasket and proved that the Sierpi\'nski gasket is homeomorphic to the Martin boundary of a Markov chain on the word space. They defined so called strongly harmonic functions and an analogous of the Laplacian on the Martin boundary. They compared their results with the general approach of Kigami \cite{Kigami1993, Kigami2001} and showed that both definitions of harmonic functions coincide. This idea was later picked up by Lau and coauthors \cite{JuLauWang2012, LauNgai2012} and they proved that the results hold for all fractals satisfying the (OSC) and some assumptions on the Markov chain.\cite{LauWang2015}.

It is a natural question, if one can extend this idea to fractals, which are maybe not so regular. For example, if one modifies the Markov chain to be non-isotropic. This was done by Kesseböhmer, Samuel and Sender in \cite{KSS2017} for the Sierpi\'nski gasket and they showed that the Martin boundary is still homeomorphic to the fractal.

Our aim is to examine the case, where we extend the IFS of the fractal by weights. This leads to a probabilistic iterated function scheme and simultaneously to a self-similar measure. In order to connect the mass distribution with the Markov chain, we have to adapt the transition probability such that it fits to the weights. As a consequence, the Green function and the Martin kernel change. We can show, that this has no influence on the Martin boundary in the inhomogenous case and the Martin boundary is still homeomorphic to the fractal.

This article is structured as follows. In Section \ref{sec:preliminaries}  we introduce the notation, some general facts about fractals and the mass distribution. Further we link up iterated function schemes with the word space.\\
In Section \ref{sec:p} we want to define a new type of transition probability on the word space, which takes the mass distribution into account. We are then able to define the Markov chain, the Green function and a new function $q$, which helps us to understand much better the behavior of the Green function. In Section \ref{sec:k} we define the Martin kernel $k$ and observe some useful properties  of $k$. A essential part in this section (and for the whole paper) is Theorem \ref{theorem:156}, which gives us the opportunity to calculate the Martin kernel in the inhomogenous case through the Martin kernel of the homogenous case. Based on $k$ we are then able to define the Martin metric $\rho$ on the word space, which enables us to define in Section \ref{sec:MB} the Martin boundary. In Theorem \ref{theo:MBgleich} we show, that the Martin boundary in the inhomogenous and the homogenous case are equal.
%
%
%
%
\section{Preliminaries}
\label{sec:preliminaries}
We want to follow mainly the notation of Denker and Sato, but in a more general setting and suppose, that the idea of fractals as Martin boundary is roughly known. For this, we refer the reader to \cite{DS2001, DS2002, LauWang2015}.
\medskip

Consider an iterated function scheme (shortened IFS) $\{S_1,\dots, S_N\} : D\subseteq \mathbb R^d\to D$ with $N$ finite and where $S_i$ are similitudes, i.e. $|S_i(x)-S_i(y)|=c_i|x-y|$ with $0<c_i<1$. Due Hutchinsons theorem \cite{Hutchinson1981} it holds, that there is a unique, non-empty compact invariant subset $K\subset \mathbb R^d$ fulfilling 
\begin{equation*}
	K=S(K)\defgl\bigcup_{i=1}^NS_i(K).
\end{equation*}

The set $K$ is called attractor of the IFS $\{S_i\}_{i=1}^N$ and we want to assume for the whole paper, that $K$ is connected. If $K$ would not be connected, we can still do the whole calculus, but it would be quiet uninteresting, since we would be later unable to define harmonic functions. %
Additionally we want to assume, that the IFS satisfies the open set condition, appreviated by (OSC). This means there exists a non-empty bounded open set $\mathcal O\subset\mathbb R^d$ such that $\bigcup_{i=1}^NS_i(\mathcal O)\subset \mathcal O$ with the union disjoint.
\medskip

The IFS respectively the pre-fractal can be described by the word space. For this, consider the alphabet $\mathcal A=\{1,\dots,N\}$ of $N$ letters and the word space 
\begin{equation*}
	\mathcal W \defgl \bigcup_{n\geq 1} \mathcal A^n\cup\{\emptyset\}
\end{equation*}
where $\emptyset$ is the empty word. Denote by $\mathcal W^\star$ the set of all infinite $\mathcal A$-valued sequences $w=w_1w_2\cdots$ and by $w\big|_n = w_1\cdots w_n$ the restriction to the first $n$ letters of $w\in\mathcal W^\star$.\\
\begin{figure}[ht]%
	\centering%
	\includegraphics[page=1,width=7.5cm]{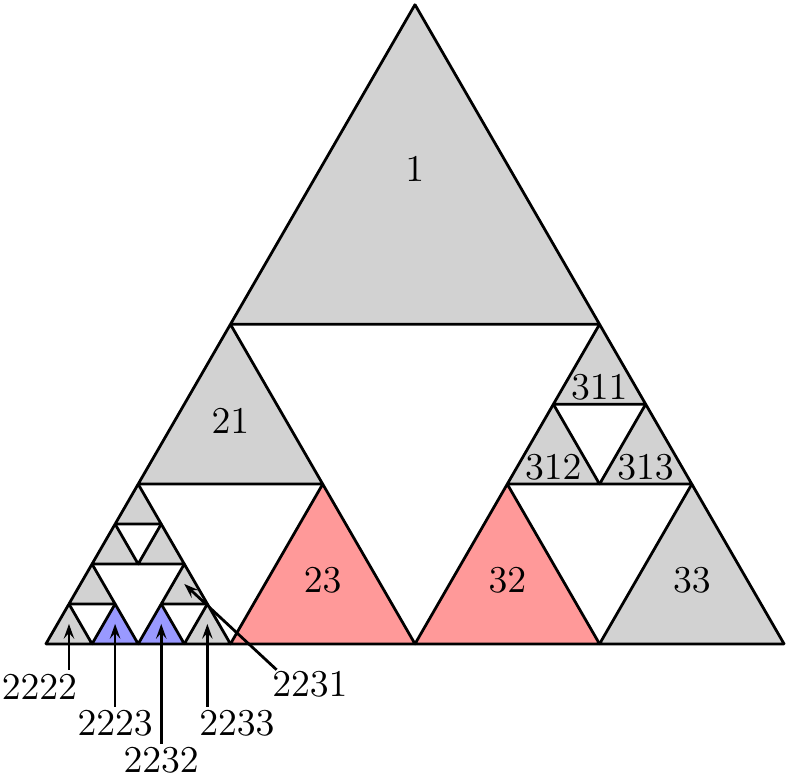}%
	\caption{Different cells of the Sierpi\'nski gasket and some equivalent words highlighted by the same color.}%
	\label{fig:CellsSG}%
\end{figure}%
For $w=w_1\cdots w_n\in\mathcal W$ with $w_i\in\mathcal A$ we want to define $\tau(w)\defgl w_n$ which is the last letter of the word $w$, the parent $w^-$ of $w$ by $w^- \defgl w_1\dots w_{n-1}$ and the length of $w$ through $|w|=n$. For two words $v, w\in\mathcal W$ we define $d(v,w)= |w|-|v|$.\\
The product $vw$ of two words $v=v_1 v_2\cdots v_m$ and $w=w_1 w_2\cdots w_n$ is defined by 
\begin{equation*}
	vw \defgl v_1 v_2 \cdots v_m w_1 w_2 \cdots w_n.
\end{equation*}
For the empty word $\emptyset$ it holds, that $|\emptyset| = 0$ and $w\emptyset = \emptyset w = w$ with $w\in\mathcal W$.
\medskip

To establish a connection between the IFS and the word space, we define $S_w(E)\defgl S_{w_1}\circ\dots\circ S_{w_n}(E)= S_{w_1}(\cdots(S_{w_n}(E)))$ for $E\subset \mathbb R^d$ and we can consider words as cells of a fractal and vice versa. %

In Figure \ref{fig:CellsSG} this is shown on the Sierpi\'nski gasket, where the upper triangle is coded by $1$ and respectively generated by $S_1$, the bottom left is decoded by $2$ and the bottom right triangle is noted by $3$.\\
An essential part is to define, when two words are equivalent. The idea behind this is to identify two infinite sequences, which decode the same point in the fractal. This should be done also on the word space. The following Definition guarantees not, that we have a equivalence relation. Nevertheless we want to use the term \emph{equivalent}.

\begin{defi}
	\label{defi:sim}
	The words $v,w\in\mathcal W$ are said to be equivalent, noted by $v\sim w$, if and only if $|v|=|w|$, $S_v(K)\cap S_w(K)\neq\emptyset$ and $v^-\neq w^-$. Additionally we say, that $v$ is equivalent to itself, such that $v\sim v$ holds.\\
	For $v,w\in\mathcal W^\star$ with $v=v_1v_2\dots$ and $w=w_1w_2\dots$ we extend this relation, such that $v\sim w$, if and only if there exists a $n_0\in\N$ such that $x|_n\sim w|_n$ holds for all $n\geq n_0$.\\
	Further we want to define the number of equivalent words by $R(w)\defgl\#\{v\in \mathcal W: v\sim w\}$.
\end{defi}
For a better understanding are in Figure \ref{fig:CellsSG} some equivalent words highlighted by the same color. 

\begin{kor}
	\label{cor:R(v)}
	If the fractal $K$ fulfills the OSC, then $R(v)<\infty$ for all $v\in\mathcal W$.
\end{kor}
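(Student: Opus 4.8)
The plan is to reduce the claim to a purely geometric counting statement and then invoke the standard bounded-overlap consequence of the open set condition. First I would fix $v\in\mathcal W$ with $|v|=n$ and observe that, by Definition \ref{defi:sim}, every word $w$ with $w\sim w$ and $w\ne v$ satisfies $|w|=n$ and $S_w(K)\cap S_v(K)\ne\emptyset$; hence $R(v)$ is bounded by the number of level-$n$ cells $S_w(K)$ that meet the fixed cell $S_v(K)$. Since all these cells are images of the fixed compact set $K$ under similitudes with contraction ratios $c_w=c_{w_1}\cdots c_{w_n}$, and since $c_{\min}^n\le c_w\le c_{\max}^n$ where $c_{\min}=\min_i c_i$ and $c_{\max}=\max_i c_i$, all the relevant cells have diameters comparable to $\operatorname{diam}(K)\cdot c_{\max}^n$ and contain copies of $S_w(\mathcal O)$ of volume comparable to $c_{\min}^{nd}\cdot\operatorname{vol}(\mathcal O)$.

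The key step is the disjointness built into the (OSC): the open sets $S_w(\mathcal O)$, $|w|=n$, are pairwise disjoint (this follows by iterating $\bigcup_i S_i(\mathcal O)\subset\mathcal O$ with disjoint union), and each $S_w(\mathcal O)$ is contained in a ball of radius $\operatorname{diam}(\mathcal O)\cdot c_{\max}^n$ while containing a ball of radius $r\cdot c_{\min}^n$ for some $r>0$ depending only on $\mathcal O$. If $S_w(K)\cap S_v(K)\ne\emptyset$ then $S_w(\overline{\mathcal O})$ lies within distance $O(c_{\max}^n)$ of $S_v(K)$, so all such $S_w(\mathcal O)$ are contained in a single ball $B$ of radius $O(c_{\max}^n)$ (with constant depending only on $K$ and $\mathcal O$). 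A volume comparison then gives
\begin{equation*}
	R(v)\;\le\;\#\{w:|w|=n,\ S_w(K)\cap S_v(K)\ne\emptyset\}\;\le\;\frac{\operatorname{vol}(B)}{\min_{|w|=n}\operatorname{vol}(S_w(\mathcal O))}\;\le\;\frac{C\,c_{\max}^{nd}}{c_{\min}^{nd}\operatorname{vol}(\mathcal O)},
\end{equation*}
which is finite for each fixed $n$; in the homogeneous-ratio case $c_{\min}=c_{\max}$ this bound is even uniform in $v$.

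The main obstacle is that when the contraction ratios differ, the naive bound above still depends on $n$ (through $(c_{\max}/c_{\min})^{nd}$), so it only yields finiteness for fixed $v$ rather than a uniform bound; since the statement only asserts $R(v)<\infty$ for each $v$ this suffices, but if a uniform bound were wanted one would instead work with a \emph{cylinder decomposition at comparable scales} — replacing level-$n$ words by the antichain of words $w$ with $c_{w^-}>\delta\ge c_w$ for a fixed $\delta$ — and run the same volume argument there, where all cells genuinely have comparable size. For the present Corollary I would simply carry out the fixed-level volume estimate and remark that it can be sharpened to a uniform bound via the comparable-scale antichain if needed later.
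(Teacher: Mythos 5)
Your argument is correct, but it takes a genuinely different route from the paper: the paper disposes of this corollary with a one-line citation to \cite[Prop.~11]{BK1991}, whereas you give a self-contained Moran-type packing argument. Two remarks. First, the statement as literally written ($R(v)<\infty$ for each fixed $v$) is immediate without any geometry: every $w\sim v$ satisfies $|w|=|v|$, and there are only $N^{|v|}$ words of that length, so $R(v)\le N^{|v|}$. The open set condition only earns its keep if one wants a bound on $R(v)$ that is \emph{uniform} in $v$ --- which is what the cited result supplies and what is implicitly needed later (e.g.\ in Remark~\ref{rem:HomogenerFall}, where (LW4) requires $\inf_v 1/(N\,R(v))>0$). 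You correctly diagnose that your fixed-level volume comparison degrades like $(c_{\max}/c_{\min})^{nd}$ and hence is not uniform in $v$, and your proposed repair via a comparable-scale antichain is the standard one; carrying it out would actually deliver more than the paper's literal statement. Second, two small points of hygiene: ``$w\sim w$'' should read ``$w\sim v$'', and when you place $S_w(\overline{\mathcal O})$ within distance $O(c_{\max}^n)$ of $S_v(K)$ you are implicitly using $K\subseteq\overline{\mathcal O}$ (true under the OSC, since $S(\overline{\mathcal O})\subseteq\overline{\mathcal O}$ forces the attractor into $\overline{\mathcal O}$), which deserves an explicit sentence. With those touches your packing argument is complete and, unlike the paper's proof, self-contained; what each approach buys is clear --- the citation is shorter, your version makes the role of the OSC visible and quantifiable.
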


\begin{proof}
	This follows directly by \cite[Prop. 11]{BK1991}.
\end{proof}

\begin{rem}
	\label{rem:gegbsp}
	\begin{figure}[ht]%
		\centering%
		\includegraphics[page=5,width=0.9\textwidth]{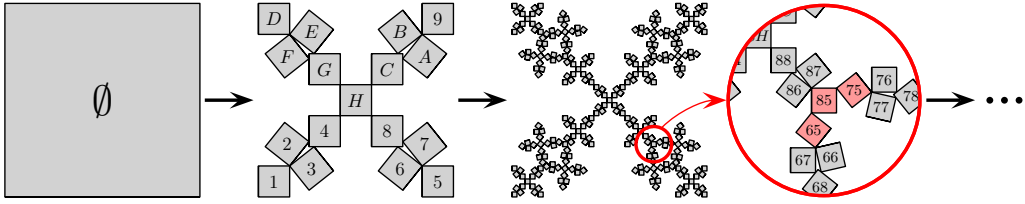}%
		\caption{Construction of the Vicsek-snowflake in the first two steps and a part of the word space of length $2$. In red are three cells highlighted, which disprove, that our relation is always transitive.}%
		\label{fig:gegbsp}%
	\end{figure}%
	The Definition \ref{defi:sim} seems to be quite insufficient, since this does not guarentee, that our relation $\sim$ is transitive. It is a natural question, if this could be induced by some other, more common condition like p.c.f (see \cite{Kigami2001}). Unfortunately, this is not the case.\\
	For this, we construct a counter example as it can be seen in figure \ref{fig:gegbsp}. This IFS contains 17 similitudes and the alphabet consists of $\mathcal A=\{1,\dots,9, A, \dots,H\}$. Each similitude has a contraction ratio of $c\approx 0.1601$%
	, which is the solution of 
	\begin{equation*}-c^2+c(
	 5+\sqrt{2-c^2})-1=0.
	\end{equation*}
	This fractal satisfies the open set condition and is p.c.f.\\
	Now consider the cells $65, 85$ and $75$, which are highlighted in figure \ref{fig:gegbsp}. It holds, that the intersection of the cells $65$ and $85$ is non-empty. Futhermore is the intersection of $85$ and $75$ non-empty and the parents of each words are different. So it follows by Definition \ref{defi:sim} that $65\sim 85$ and $85\sim 75$. On the other hand it holds that $S_{65}(K)\cap S_{75}(K)=\emptyset$ and thus $65\not\sim 75$, which shows that the relation is not transitive and this cannot be induced by a condition like p.c.f..
\end{rem}

We want to take a deeper look at so called nested fractals. As we will see, on those fractals $\sim$ forms a equivalence relation. But first, let us clarify what a nested fractal is.

\begin{defi}[\cite{Hambly2000}]
	\label{defi:nested}
	We want to denote by $F_0'\defgl\{q_i: S_i(q_i)=q_i\}$ the set of all fixed points of the similitudes $S_i$. Further we want do define the set of all essential fixed points $F_0$ by $F_0\defgl\{x\in F_0': \exists i, j\in\mathcal A,  y \in F_0', x\neq y\text{ st. } S_i(x)=S_j(y)\}$.
	A fractal $K$ is then called nested, if it satisfies:
	\begin{enumerate}
		\item Connectivity: For any $1$-cells $C$ and $C'$, there is a sequence $\{C_i: i=0,\dots, n\}$ of $1$-cells such that $C_0=C, C_n = C'$ and $C_{i-1}\cap C_i\neq \emptyset, i=1,\dots, n$.
		\item Symmetry: If $x,y\in F_0$, then reflection in the hyperplane $H_{xy}=\{Z:|z-x|=|z-y|\}$ maps $S^n(F_0)$ to itself.
		\item Nesting: If $v,w\in\mathcal W$ with $v\neq w$, then 
		\begin{equation*}
			S_v(K)\cap S_w(K) = S_v(F_0)\cap S_w(F_0)
		\end{equation*}
		\item Open set condition (OSC): There is a non-empty, bounded, open set $\mathcal O$ such that the $S_i(\mathcal O)$ are disjoint and $\bigcup_{i=1}^NS_i(\mathcal O)\subseteq \mathcal O$.
	\end{enumerate}
\end{defi}

As a first observation we consider some properties of the fixed points $q_i$.

\begin{kor}[{\cite[Corollary in \S 9]{BK1991}}]
	\label{cor:bandt}
	Let $S_1,\dots, S_N$ be a IFS with OSC and attractor $K$. Then belongs $q_i$ exactly to one $S_j(K)$ with $j=i$.
\end{kor}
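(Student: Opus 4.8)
The plan is to split the statement into two parts and handle them separately: the inclusion $q_i\in S_i(K)$, which is elementary, and the exclusion $q_i\notin S_j(K)$ for $j\neq i$, which is where the open set condition is needed. For the inclusion, pick any $x\in K$; since $x\in K=S(K)$ one has $S_i(x)\in S_i(K)\subseteq K$, and inductively $S_i^n(x)\in S_i(K)$ for every $n\geq 1$. As $S_i$ is a contraction with unique fixed point $q_i$, the sequence $S_i^n(x)$ converges to $q_i$, and because $S_i(K)$ is compact, hence closed, the limit $q_i$ lies in $S_i(K)$.

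For the exclusion, fix an open set $\mathcal O$ witnessing the (OSC). First I would record the standard fact $K\subseteq\overline{\mathcal O}$: since $S(\overline{\mathcal O})=\bigcup_i\overline{S_i(\mathcal O)}\subseteq\overline{\mathcal O}$ and $\overline{\mathcal O}$ is compact ($\mathcal O$ is bounded), the sets $S^n(\overline{\mathcal O})$ form a decreasing sequence of non-empty compacta whose intersection is a non-empty compact set invariant under $S$, hence equal to $K$ by uniqueness of the attractor. Now suppose $q_i\in S_j(K)$ with $j\neq i$. Then $q_i\in S_i(K)\cap S_j(K)\subseteq\overline{S_i(\mathcal O)}\cap\overline{S_j(\mathcal O)}$, and since $S_i(\mathcal O)$ and $S_j(\mathcal O)$ are disjoint open sets, $q_i$ can lie in neither of them, so $q_i\in\partial S_i(\mathcal O)=S_i(\partial\mathcal O)$; applying $S_i^{-1}$ and using $S_i(q_i)=q_i$ yields $q_i\in\partial\mathcal O$. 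The same argument applied to the cells $S_i^n(K)\subseteq\overline{S_i^n(\mathcal O)}$, whose images $S_i^n(\mathcal O)\subseteq S_i(\mathcal O)$ remain disjoint from $S_j(\mathcal O)$, gives $q_i\in\partial S_i^n(\mathcal O)$ for every $n$. Thus $q_i$ sits on the boundary of arbitrarily small cells $S_i^n(\mathcal O)$ while simultaneously belonging to the foreign piece $S_j(K)$.

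The remaining step is to turn this configuration into a contradiction, and this is exactly the content of \cite[Corollary in \S 9]{BK1991}. Heuristically: since $K$ is connected, $q_i$ is not isolated in $S_j(K)$, so there is a chain of sub-cells of $S_j(K)$ shrinking to $q_i$; comparing these with the cells $S_i^n(K)$ of matching diameter produces incomparable overlapping cells whose relative-position maps $S_u^{-1}\circ S_v$ approach an isometry, which the Bandt--Graf characterisation of the (OSC) forbids. I expect this quantitative step to be the main obstacle to a fully self-contained argument; the reduction above is routine, so in the paper it is cleanest simply to invoke the cited corollary for the final conclusion.
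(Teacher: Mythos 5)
The paper gives no proof of this corollary at all: the statement is simply quoted from \cite[Corollary in \S 9]{BK1991}, so there is no internal argument to compare yours against. Your proposal goes further than the paper in two respects, and both added pieces are correct. The inclusion $q_i\in S_i(K)$ via $S_i^n(x)\to q_i$ together with the closedness of the compact set $S_i(K)$ is a complete, self-contained proof of that half. The reduction of the exclusion is also sound: $K\subseteq\overline{\mathcal O}$ is the standard consequence of $S(\overline{\mathcal O})\subseteq\overline{\mathcal O}$ and uniqueness of the attractor, and the chain $q_i\in\overline{S_i(\mathcal O)}\cap\overline{S_j(\mathcal O)}$, disjointness of the open images, $\partial S_i(\mathcal O)=S_i(\partial\mathcal O)$ and $S_i(q_i)=q_i$ correctly forces $q_i\in\partial\mathcal O$, hence $q_i\in\partial S_i^n(\mathcal O)$ for every $n$. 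The decisive step, however --- ruling out that configuration --- is still delegated to the very result of \cite{BK1991} that is being stated, so your write-up is not an independent proof of the exclusion but a (correct) preprocessing of it; note also that what you describe as ``exactly the content of the cited corollary'' is in fact the full statement itself rather than merely the residual quantitative step, so the citation carries the same weight in your version as in the paper's. In short: the ultimate approach is the same (invoke the cited corollary), but you supply a genuine elementary proof of the easy inclusion and a clean geometric normal form for the hard one, neither of which the paper records.
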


\begin{kor}
	\label{cor:qiqj}
	Let $S_1,\dots, S_N$ be a IFS with OSC and attractor $K$. Let $q_i$ be the fixed point of $S_i$. If $q_i=q_j$ holds, then $i=j$ follows.
\end{kor}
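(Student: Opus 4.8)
The statement is an immediate consequence of Corollary \ref{cor:bandt}, so the proof will be short. The idea is: if a point is the fixed point of two maps $S_i$ and $S_j$, it must lie in both cells $S_i(K)$ and $S_j(K)$, and Corollary \ref{cor:bandt} forbids a fixed point from lying in more than one cell.

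In detail, I would argue as follows. Suppose $q_i = q_j$. Since $q_i$ is the fixed point of $S_i$ we have $S_i(q_i) = q_i$, hence $q_i \in S_i(K)$; likewise $q_j = S_j(q_j) \in S_j(K)$. Using $q_i = q_j$, this common point therefore lies in $S_i(K) \cap S_j(K)$. Now apply Corollary \ref{cor:bandt} to the fixed point $q_i$: it belongs to exactly one of the cells $S_1(K), \dots, S_N(K)$, namely to $S_i(K)$. Since we have just shown it also lies in $S_j(K)$, the uniqueness part forces $j = i$, which is the claim.

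The only point requiring a little care is the precise reading of Corollary \ref{cor:bandt}, i.e. that ``$q_i$ belongs to exactly one $S_j(K)$'' is genuinely an exclusivity statement about the index, not merely an existence statement; once that is granted there is no real obstacle. No appeal to connectedness of $K$ or to the structure of the word space is needed here — only the OSC (which is what underlies Corollary \ref{cor:bandt}).
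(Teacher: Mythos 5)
Your proof is correct and follows essentially the same route as the paper: both arguments rest solely on Corollary \ref{cor:bandt}, noting that the common point $q_i=q_j$ lies in $S_i(K)\cap S_j(K)$ while the corollary's exclusivity forces the two indices to agree. Nothing is missing.
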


\begin{proof}
	Consider $q_i=q_j$ with $i\neq j$. By Corollary \ref{cor:bandt} it follows, that $q_i\in S_i(K)$ and on the same time $q_j = q_i\in S_i(K)$ holds. Using again Corollary \ref{cor:bandt}, $i=j$ follows.
\end{proof}

Now we want to analyze the intersection of some cells. The first lemma consider the intersection with a children, the second the intersection of two words with the same length.

\begin{lem}
	\label{lem:231}
	Let $u\in\mathcal W$ and $a\in\mathcal A$. Then it holds:
	The cell $ua$ contains only one element from $S_u(F_0)$, namely $S_u(q_a)$. In other words:
	\begin{equation*}
		S_{ua}(K)\cap S_u(F_0)=S_u(q_a).
	\end{equation*}
\end{lem}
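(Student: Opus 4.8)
The plan is to transport the asserted identity through the similitude $S_u$ and thereby reduce to the first level of the construction. Since $S_u = S_{u_1}\circ\cdots\circ S_{u_n}$ is itself a similitude it is injective, and hence $S_u(A\cap B) = S_u(A)\cap S_u(B)$ for all sets $A,B$. Using $S_{ua}(K) = S_u\bigl(S_a(K)\bigr)$ this gives
\begin{equation*}
	S_{ua}(K)\cap S_u(F_0) = S_u\bigl(S_a(K)\bigr)\cap S_u(F_0) = S_u\bigl(S_a(K)\cap F_0\bigr),
\end{equation*}
so it suffices to treat the case $u=\emptyset$, i.e.\ to show $S_a(K)\cap F_0 = \{q_a\}$, and then apply $S_u$ to both sides.

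For this special case I would invoke the corollaries on fixed points. The inclusion ``$\supseteq$'' is immediate: by Corollary~\ref{cor:bandt} the fixed point $q_a$ lies in $S_a(K)$, and $q_a\in F_0$, hence $q_a\in S_a(K)\cap F_0$. For the converse, take $x\in S_a(K)\cap F_0$. Because $F_0\subseteq F_0'$, the point $x$ is the fixed point of some map, say $x=q_b$ with $b\in\mathcal A$. Then $q_b\in S_a(K)$; but Corollary~\ref{cor:bandt} says that $q_b$ is contained in exactly one of the cells $S_1(K),\dots,S_N(K)$, namely $S_b(K)$. Therefore $a=b$, so $x=q_a$, which yields $S_a(K)\cap F_0\subseteq\{q_a\}$. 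Combining the two inclusions and applying the injective map $S_u$ completes the proof.

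I do not expect a genuine obstacle here: the substantive content is entirely supplied by Corollary~\ref{cor:bandt}, which determines precisely which first-level cell a given fixed point can belong to. The only point that deserves care is the opening reduction, since the equality $S_u\bigl(S_a(K)\cap F_0\bigr) = S_u\bigl(S_a(K)\bigr)\cap S_u(F_0)$ really does use injectivity of $S_u$ (it can fail for non-injective maps), so it is worth spelling that out rather than leaving it implicit.
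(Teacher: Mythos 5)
Your proof is correct, but it takes a genuinely different and noticeably more economical route than the paper's. The paper works directly at level $|u|+1$: it writes the candidate intersection as $S_{ua}(K)\cap S_{uk}(q_k)$ for $q_k\in F_0'$, invokes the \emph{nesting} property to force any intersection point into $S_{ua}(F_0)\cap S_{uk}(F_0)$, and then runs a bookkeeping argument over the points of $S_u(F_0)$ to conclude $q_a=q_k$ and hence $a=k$ via Corollary~\ref{cor:qiqj}. You instead factor the whole identity through the injective map $S_u$, reducing to the level-one statement $S_a(K)\cap F_0=\{q_a\}$, and settle that in one line from Corollary~\ref{cor:bandt}: any $x\in F_0\subseteq F_0'$ is some $q_b$, and $q_b\in S_a(K)$ forces $b=a$. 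What your approach buys is that the nesting property is never used -- only the OSC (through Corollary~\ref{cor:bandt}) and injectivity of similitudes -- so your argument is both shorter and valid in greater generality; you are also right that the set-theoretic identity $S_u(A\cap B)=S_u(A)\cap S_u(B)$ is the one step worth spelling out. One shared caveat, which affects the paper's proof equally and is not a defect specific to yours: the inclusion $\supseteq$ requires $q_a\in F_0$ (not merely $q_a\in F_0'$), which is implicitly assumed by the statement of Lemma~\ref{lem:231} itself; if $q_a$ happened not to be an essential fixed point, both arguments would only yield $S_{ua}(K)\cap S_u(F_0)\subseteq\{S_u(q_a)\}$, which is in any case the inclusion actually used in Lemma~\ref{lem:232} and Proposition~\ref{prop:nestedER}.
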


\begin{proof}
	Consider $q_k\in F_0'$. It holds, that $S_k(q_k)=q_k$ is true, thus $S_u(q_k) = S_{uk}(q_k)$ holds and
	\begin{equation}
		S_{ua}(K)\cap S_u(q_k) = S_{ua}(K)\cap S_{uk}(q_k)\label{eq:bew-lemma-231}
	\end{equation}
	follows.\\
	Our goal is to show, that \eqref{eq:bew-lemma-231} is empty for $k\neq a$ and consists of one point, if $k = a$.\\
	So let us take a look at those two cases.\\
	We first consider $k=a$. It follows, that 
	\begin{equation*}
		S_{ua}(K)\cap S_{uk}(q_k)=S_{ua}(K)\cap S_{ua}(q_a)=\{x\}
	\end{equation*}
	holds and thus the intersection consists of one single point.\\
	Let us now consider $k\neq a$. Assume, that \eqref{eq:bew-lemma-231} is non-empty. In particular we assume, that
	\begin{equation*}
		\{y\}= S_{ua}(K)\cap S_{uk}(q_k)= S_{ua}(p)\cap S_{uk}(q_k)
	\end{equation*}
	with $p\in K$. Since $ua\neq uk$ holds by assumption and our fractal is nested, we conclude, that $p\in F_0$ must hold by the nesting property.\\
	Further observe that in general $S_{ua}(F_0)\cap S_{vb}(F_0)\subseteq S_{u}(F_0)\cap S_{v}(F_0)$ holds. Using this it follows, that
	\begin{align*}
		\emptyset &\neq S_{ua}(K)\cap S_{uk}(K) = S_{ua}(F_0) \cap S_{uk}(F_0) \subseteq S_{u}(F_0)\cap S_{u}(F_0) =\\
		&=  S_u(F_0) = \{S_u(q_{b_1}), \dots, S_u(q_{b_n})\} = \{S_{ub_1}(q_{b_1}), \dots, S_{ub_n}(q_{b_n})\}
	\end{align*}
	holds, where $n=|F_0|$ and $b_i\in F_0$ with $b_i\neq b_j$ for $i\neq j$.\\
	Let us denote further
	\begin{equation*}
		\{y_m\}= S_{ua}(q_{c_m})\cap S_{uk}(q_{d_m})\qquad\text{for some } m
	\end{equation*}
	with $q_{c_m}, q_{d_m}\in F_0$.\\
	On the other hand $\{y_m\}=S_{ub_m}(q_{b_m})$ holds. Thus it follows, that $c_m = a$ and $d_m = k$ holds, or more precisely $S_{ua}(q_a) = S_{uk}(q_k)$. We can reformulate this into
	\begin{equation*}
	 S_{u}(q_a) = S_{ua}(q_a) = S_{uk}(q_k) = S_{u}(q_k)
	\end{equation*}
	and see, that $q_a = q_k$ must hold. By Corollary \ref{cor:qiqj} follows $a = k$, which contradicts our assumption.
\end{proof}

\begin{lem}
	\label{lem:232}
	For nested fractals and words $u, v\in\mathcal W$ with $|u| = |v|$, $u\neq v$ and $u^-\neq v^-$ it holds, that $S_u(K)\cap S_v(K)$ consists of at most one single point.
\end{lem}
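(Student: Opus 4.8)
The plan is to reduce the claim to an iterated application of Lemma~\ref{lem:231}. Write $u = u_1\cdots u_n$ and $v = v_1\cdots v_n$ with $n=|u|=|v|$. Since $S_u(K)\subseteq S_{u_1\cdots u_{n-1}}(K)$ and likewise for $v$, any point $x\in S_u(K)\cap S_v(K)$ already lies in $S_{u_1\cdots u_{n-1}}(K)\cap S_{v_1\cdots v_{n-1}}(K)$. I would let $m$ be the smallest index such that $u_1\cdots u_m \neq v_1\cdots v_m$; by the hypothesis $u^-\neq v^-$ we have $m\leq n-1$, and the prefixes agree up to position $m-1$, so writing $w\defgl u_1\cdots u_{m-1}=v_1\cdots v_{m-1}$ we have $u_m\neq v_m$. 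The intersection $S_u(K)\cap S_v(K)$ is then contained in $S_{wu_m}(K)\cap S_{wv_m}(K)$, and since the fractal is nested, the nesting property gives $S_{wu_m}(K)\cap S_{wv_m}(K) = S_{wu_m}(F_0)\cap S_{wv_m}(F_0)$.

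The next step is to pin this intersection down to a single point using Lemma~\ref{lem:231}. By the inclusion $S_{wa}(F_0)\subseteq S_w(F_0)$ used in the proof of Lemma~\ref{lem:231}, we get $S_{wu_m}(F_0)\cap S_{wv_m}(F_0)\subseteq S_w(F_0)\cap S_w(F_0)=S_w(F_0)$, so the intersection is a subset of the finite set of points $S_w(q_b)$, $b\in F_0$. For any such point $S_w(q_b)$ lying in $S_{wu_m}(K)$, Lemma~\ref{lem:231} applied with $u\leftarrow w$ forces $S_w(q_b)=S_w(q_{u_m})$, i.e.\ $b=u_m$ after cancelling the injective map $S_w$ and invoking Corollary~\ref{cor:qiqj}; similarly membership in $S_{wv_m}(K)$ forces $b=v_m$. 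Since $u_m\neq v_m$, no point of $S_w(F_0)$ can lie in both children simultaneously, so $S_{wu_m}(K)\cap S_{wv_m}(K)$ is either empty or — if a genuine intersection point exists — must reconcile these constraints, which is impossible unless the intersection reduces to the single point where the two cells of $S_w(F_0)$ meet through distinct fixed points.

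Being careful, the cleaner route is: $S_{wu_m}(K)\cap S_{wv_m}(K)=S_{wu_m}(F_0)\cap S_{wv_m}(F_0)$, and each point of this set lies in $S_w(F_0)$, so it has the form $S_w(q_b)$ for some $b$; by Lemma~\ref{lem:231}, $S_w(q_b)\in S_{wu_m}(K)$ only for $b=u_m$, giving $S_{wu_m}(K)\cap S_w(F_0)=\{S_w(q_{u_m})\}$ and likewise $S_{wv_m}(K)\cap S_w(F_0)=\{S_w(q_{v_m})\}$; hence $S_{wu_m}(K)\cap S_{wv_m}(K)\subseteq\{S_w(q_{u_m})\}\cap\{S_w(q_{v_m})\}$, which is a single point if $S_w(q_{u_m})=S_w(q_{v_m})$ and empty otherwise. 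In either case it has at most one element, and $S_u(K)\cap S_v(K)$ is contained in it, which proves the lemma.

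The main obstacle is bookkeeping rather than depth: one must correctly locate the first position $m$ where the prefixes diverge, verify $m\le n-1$ from the hypothesis on the parents, and justify that the chain of inclusions $S_u(K)\cap S_v(K)\subseteq S_{wu_m}(K)\cap S_{wv_m}(K)$ is valid even though $u$ and $v$ have length $n>m$. I also want to make sure the appeal to Lemma~\ref{lem:231} is legitimate — it is stated for a single letter $a$ appended to an arbitrary word, which is exactly the configuration $wu_m$ and $wv_m$ — and that injectivity of the similitude $S_w$ together with Corollary~\ref{cor:qiqj} is what lets us pass from equality of images of fixed points back to equality of letters.
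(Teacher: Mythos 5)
The reduction in your first paragraph is fine as far as it goes: locating the first divergence index $m$, checking $m\le n-1$ from $u^-\neq v^-$, and the inclusion $S_u(K)\cap S_v(K)\subseteq S_{wu_m}(K)\cap S_{wv_m}(K)$ are all correct. The argument breaks at the next step, where you claim $S_{wu_m}(F_0)\cap S_{wv_m}(F_0)\subseteq S_w(F_0)$ via an inclusion $S_{wa}(F_0)\subseteq S_w(F_0)$. That inclusion is false: it would mean $S_a(F_0)\subseteq F_0$, i.e.\ that each similitude maps essential fixed points to essential fixed points. For the Sierpi\'nski gasket with $F_0=\{p_1,p_2,p_3\}$ one has $S_1(F_0)=\{p_1,p_{1,2},p_{1,3}\}\not\subseteq F_0$, and correspondingly $S_1(K)\cap S_2(K)=\{p_{1,2}\}$ with $p_{1,2}\notin F_0$. (What the proof of Lemma~\ref{lem:231} actually uses is the containment $S_{ua}(F_0)\cap S_{vb}(F_0)\subseteq S_u(F_0)\cap S_v(F_0)$, a statement about intersections resting on the nesting property, which is only available for distinct words.) The error is not cosmetic: if your chain of inclusions were valid, Lemma~\ref{lem:231} together with Corollary~\ref{cor:qiqj} and injectivity of $S_w$ would give $S_{wu_m}(K)\cap S_{wv_m}(K)\subseteq\{S_w(q_{u_m})\}\cap\{S_w(q_{v_m})\}=\emptyset$, i.e.\ two cells would be disjoint as soon as their codes first diverge --- contradicting connectedness of $K$ and the example $S_{12}(K)\cap S_{21}(K)=\{p_{1,2}\}$.

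The underlying issue is that your reduction discards the hypothesis $u^-\neq v^-$: the sibling cells $wu_m$ and $wv_m$ have the \emph{same} parent $w$, so the nesting property cannot be invoked one level up, and that is precisely the lever the paper's proof pulls. The paper argues instead that each point $x_i$ of $S_u(K)\cap S_v(K)=S_u(F_0)\cap S_v(F_0)$ also lies in $S_{u^-}(K)\cap S_{v^-}(K)=S_{u^-}(F_0)\cap S_{v^-}(F_0)$ --- nesting applies here exactly because $u^-\neq v^-$ --- so $x_i=S_{u^-}(q_{A_i})$ with the $q_{A_i}$ pairwise distinct for distinct $i$; since $x_i\in S_u(K)\cap S_{u^-}(F_0)$, Lemma~\ref{lem:231} forces $A_i=\tau(u)$ for every $i$, whence there is at most one such point. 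To repair your approach you would have to analyse $S_{wu_m}(F_0)\cap S_{wv_m}(F_0)$ by descending a level (its points are of the form $S_{wu_m}(q)$ with $q\in F_0$), not by ascending to $S_w(F_0)$; but then you are proving a statement about sibling cells that is not the lemma and still makes no use of the assumption on the parents.
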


\begin{proof}
	Let us only consider words $u,v\in\mathcal W$, where $S_u(K)\cap S_v(K)\neq\emptyset$ holds. As a first step we note, that the intersection is a point set, since
	\begin{equation*}
		S_u(K)\cap S_v(K) = S_u(F_0)\cap S_v(F_0) \defgl \{x_1,\dots, x_n\}
	\end{equation*}
	holds by the nesting property.\\
	Now consider $u, v\in\mathcal W$ with $|u|=|v|, u\neq v$ and $u^-\neq v^-$. We want to denote the points $\{x_i\}$ and for this, we can represent them as
	\begin{equation*}
		\{x_i\} = S_{u}(q_{a_i})\cap S_{v}(q_{b_i})
	\end{equation*}
	where $q_{a_i}\neq q_{a_j}$ and $q_{b_i}\neq q_{b_j}$ for $i\neq j$ must hold.\\
	At the same time we can represent this point by
	\begin{equation*}
		\{x_i\} = S_{u^-}(q_{A_i})\cap S_{v^-}(q_{B_i}),
	\end{equation*}
	again with $q_{A_i}\neq q_{A_j}$ and $q_{B_i}\neq q_{B_j}$ for $i\neq j$.\\
	Thus it follows, that $S_u(q_{a_i}) = \{x_i\} = S_{u^-}(q_{A_i})$. We can now apply Lemma \ref{lem:231} and it follows, that $A_i = \tau(u)$ for all $i$ and thus $n \leq 1$ follows.
\end{proof}

Finally we are able to proof the following Proposition:

\begin{prop}
	\label{prop:nestedER}
	It holds, that for nested fractals the relation $\sim$ defined in Definition \ref{defi:sim} is transient and thus defines a equivalence relation.
\end{prop}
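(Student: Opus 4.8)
The plan is to establish transitivity of $\sim$; since reflexivity and symmetry are built directly into Definition \ref{defi:sim}, this is all that is missing for $\sim$ to be an equivalence relation. So I would start from $u\sim v$ and $v\sim w$ and aim for $u\sim w$. If two of $u,v,w$ coincide, or if $u=w$, the claim is immediate (in the last case by reflexivity), so I may assume the three words are pairwise distinct; then $|u|=|v|=|w|=:n$, and the definition gives $u^-\neq v^-$ and $v^-\neq w^-$. Note that necessarily $n\geq 2$, since any two distinct words of length $1$ share the parent $\emptyset$ and hence are never equivalent.

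The heart of the argument is to show that the two intersection points in play are actually one and the same. By Lemma \ref{lem:232}, $S_u(K)\cap S_v(K)=\{x\}$ and $S_v(K)\cap S_w(K)=\{y\}$ are singletons. First I would push $x$ up one level: from $x\in S_u(K)\subseteq S_{u^-}(K)$ and $x\in S_v(K)\subseteq S_{v^-}(K)$ together with $u^-\neq v^-$, the nesting property gives $x\in S_{u^-}(F_0)\cap S_{v^-}(F_0)$. In particular $x\in S_v(K)\cap S_{v^-}(F_0)$, and applying Lemma \ref{lem:231} with the word $v^-$ and the letter $\tau(v)$ identifies this set as the single point $S_{v^-}(q_{\tau(v)})$; hence $x=S_{v^-}(q_{\tau(v)})$. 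The same chain of reasoning applied to $y$, this time using $v^-\neq w^-$ in the nesting step, yields $y\in S_{v^-}(F_0)\cap S_{w^-}(F_0)$ and therefore $y=S_{v^-}(q_{\tau(v)})$ as well. Thus $x=y$, and in particular $x=y\in S_u(K)\cap S_w(K)$, so that intersection is non-empty.

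It then remains to rule out $u^-=w^-$. Assuming it, write $u=u^-a$ and $w=u^-b$ with $a=\tau(u)$, $b=\tau(w)$, and $a\neq b$ because $u\neq w$. The same up-one-level argument, now through $u^-\neq v^-$, gives $x\in S_u(K)\cap S_{u^-}(F_0)$, so $x=S_{u^-}(q_a)$ by Lemma \ref{lem:231}; likewise, since $x=y\in S_w(K)$ and $x=y\in S_{v^-}(F_0)\cap S_{w^-}(F_0)=S_{v^-}(F_0)\cap S_{u^-}(F_0)$ (from the nesting step on $y$), I get $x\in S_w(K)\cap S_{w^-}(F_0)$ and hence $x=S_{u^-}(q_b)$. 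Injectivity of $S_{u^-}$ forces $q_a=q_b$, and then Corollary \ref{cor:qiqj} gives $a=b$, a contradiction. Hence $u^-\neq w^-$, and combined with $|u|=|w|$ and $S_u(K)\cap S_w(K)\neq\emptyset$ this is exactly $u\sim w$.

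I expect the step $x=y$ to be the only genuine obstacle: the trick is to recognise that an intersection point of $S_v(K)$ with \emph{any} cell of the same length whose parent differs from $v^-$ must lie in $S_{v^-}(F_0)$, and that Lemma \ref{lem:231} then pins down such a point uniquely inside $S_v(K)$. Everything after that is routine manipulation with Lemma \ref{lem:231} and Corollary \ref{cor:qiqj}.
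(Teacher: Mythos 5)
Your proposal is correct and follows essentially the same route as the paper: both arguments use Lemma \ref{lem:232} to reduce to singleton intersections, then pin each of the two intersection points down as $S_{v^-}(q_{\tau(v)})$ via the nesting property and Lemma \ref{lem:231} (so they coincide and $S_u(K)\cap S_w(K)\neq\emptyset$), and finally exclude $u^-=w^-$ by the same injectivity argument through Corollary \ref{cor:qiqj}. The only cosmetic difference is notation ($u=u^-\tau(u)$ versus the paper's $uU$) plus your explicit remark that the length-one case is vacuous, which the paper leaves implicit.
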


\begin{proof}
	It is clear, that $\sim$ is reflexive and symmetric.\\
	The only critical part is the transitivity of $\sim$, thus we want to show, that $u\sim v\sim w$ implies $u\sim w$. For the sake of understanding we want to study $uU\sim vV\sim wW$ with $u,v,w\in\mathcal W$ and $U,V,W\in\mathcal A$.\\
	Without loss of generality we consider $uU\neq vV\neq wW\neq uU$, since it becomes otherwise trivial.\\
	Let us now proof, that $uU\sim wW$ holds. Thus we have to prove, that $|uU|=|wW|$, $S_{uU}(K)\cap S_{wW}(K)\neq \emptyset$ and $u\neq w$.\\
	The first part is very easy, since $|uU| = |vV| = |wW|$ holds.\\
	In our second step we show, that
	\begin{equation*}
		S_{uU}(K)\cap S_{wW}(K) = S_{uU}(F_0)\cap S_{wW}(F_0) \neq\emptyset
	\end{equation*}
	since the fractal is nested. It holds by Lemma \ref{lem:232}, that 
	\begin{equation*}
		S_{uU}(K)\cap S_{vV}(K) = \{x_1\}
	\end{equation*}
	since $uU\sim vV$ and thus $u\neq v$. Further it holds, that
	\begin{equation*}
		\{x_1\}\in S_u(K)\cap S_v(K) = S_u(F_0)\cap S_v(F_0).
	\end{equation*}
	Or in other words: $\{x_1\}\in S_u(F_0)$ and $\{x_1\}\in S_{uU}(K)$. By Lemma \ref{lem:231} it follows, that
	\begin{equation}
		\{x_1\} = S_u(q_U) = S_{uU}(K)\cap S_u(F_0)\label{eq:bew-prop-230-1}
	\end{equation}
	and in the same way
	\begin{equation*}
		\{x_1\} = S_v(q_V) = S_{vV}(K)\cap S_v(F_0).
	\end{equation*}
	On the other hand it follows with the same argumenation for $S_{vV}(K)\cap S_{wW}(K) = \{x_2\}$ that $\{x_2\} = S_v(q_V) = S_{vV}(K)\cap S_v(F_0)$ and 
	\begin{equation}
		\{x_2\} = S_w(q_W) = S_{wW}(K)\cap S_w(F_0)\label{eq:bew-prop-230-2}
	\end{equation}
	holds. In particular we obtain $\{x_2\} = S_v(q_V) = \{x_1\}$ and we conclude, that 
	\begin{equation*}
		\{x_1\} = \{x_2\} \in S_{uU}(F_0)\cap S_{wW}(F_0) = S_{uU}(K)\cap S_{wW}(K) 
	\end{equation*}
	holds. Thus the intersection of $S_{uU}(K)$ and $S_{wW}(K)$ is non-empty.\\
	In our last part we want to prove, that $u\neq w$ holds. For this we want to assume, that $u=w$. It follows, that $U\neq W$ must hold, since otherwise $uU = wW$ would hold. By equation \eqref{eq:bew-prop-230-1} and \eqref{eq:bew-prop-230-2} we know, that 
	\begin{equation*}
		S_u(q_U) = \{x_1\} = \{x_2\} = S_w(q_W) = S_u(q_W)
	\end{equation*}
	holds, where the last equality follows by $u=w$. This implies that $q_U = q_W$ and by Corollary \ref{cor:qiqj} $U = W$ must hold, which contradicts our assumptions. Thus $u\neq w$ follows and we obtain in general, that $\sim$ is transitive.
\end{proof}

\begin{rem}
	\label{rem:SC}
	A general definition of $\sim$ such that $\sim$ is transitive is quite complicated or maybe impossible. It can differ from iterated function scheme to iterated function scheme and in some cases it can be necessary, to adapt the definition to the IFS.\\
	\begin{figure}[ht]%
		\centering%
		\includegraphics[page=2,width=0.775\textwidth]{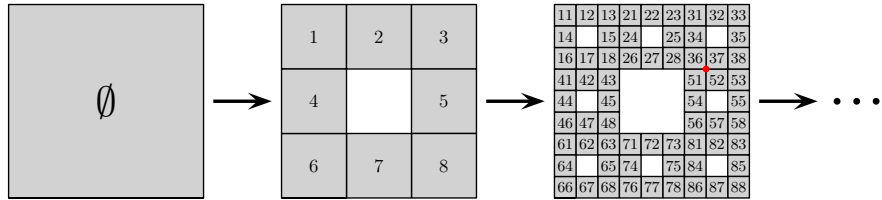}%
		\caption{Construction of the Sierpi\'nski carpet in the first two steps and words up to length $2$. The red point highlights the intersection of the cells $36$ and $52$.}%
		\label{fig:SC}%
	\end{figure}%
	For example, if we take a short look at the Sierpi\'nski carpet as in figure \ref{fig:SC}, we can see that the Definition regarding \ref{defi:sim} won't form a equivalence relation. This is due the fact, that $S_{36}(K)\cap S_{52}(K)$ is non empty (highlighted by the red point). It seems unnatural, that $36\sim 52$ should hold. Instead it should hold, that $36, 51$ and $28$ are equivalent words. The definition of $\sim$ can be futher extended (for example by $25\sim34$) and in this case it holds, that $\sim$ forms a equivalence relation. Further is $R(v)\leq 4$ and we would be able, to describe the Sierpi\'nski carpet with the Markov chain defined below.\\
 	The three words $28, 36$ and $51$ provide a another interesting fact. It holds, that the dimension of those intersections have different Hausdorff dimension: $\dim_H(S_{28}(K)\cap S_{36}(K))=1$, but $\dim_H(S_{28}(K)\cap S_{51}(K))=0$. It is possible, to adjust the transition probability with respect to this fact. By now it is unclear, if this has an effect and what this effect is. We want to study this different question in an other article \cite{FK2019}.
\end{rem}

\begin{assumpt}
	The Remarks \ref{rem:SC} and \ref{rem:gegbsp} as well as Prop. \ref{prop:nestedER} show, that $\sim$ can be a equivalence relation or not, which also depends on the definition of $\sim$. Since this is a essential part of the paper, we make the following assumption for the rest of the paper:
	\begin{center}
	\textbf{(A)} The relation $\sim$ is a equivalence relation
	\end{center}
\end{assumpt}

Additionally we mant to introduce a mass distribution $m$ on the alphabet $\mathcal A$ with $m(a) \in(0,1)$ for all $a\in\mathcal A$ and with $\sum_{a\in\mathcal A} m(a)=1$ and $m(\emptyset)=1$. %
This means, that every similitude gets a probability which we also can understand as every cell becomes a weight. For this reason we can see this as a fractal, where some parts are heavier than others. This should be done iterative, such that we define for a word $w=w_1w_2\dots w_n\in\mathcal W$ with $w_i\in\mathcal A$ the mass $m(w)$ through $m(w)\defgl m(w_1)\cdots m(w_n)$. It holds, that this generates by \cite[Theorem 2.8]{Falconer1997} a self-similar Borel measure $\mu$ such that
\begin{align*}%
 \mu(A) = \sum_{i=1}^Nm(i)\mu(S_i^{-1}(A))
\end{align*}
\begin{figure}[ht]%
	\centering%
	\begin{subfigure}[b]{0.475\textwidth}
		\includegraphics[page=3,width=\textwidth]{Grafiken.pdf}%
		\caption{The weighted Sierpi\'nski gasket with mass of 0.5 in the bottom left triangle. The two other letters have a mass of 0.25.}
	\end{subfigure}
	~
	\begin{subfigure}[b]{0.475\textwidth}
		\includegraphics[width=\textwidth]{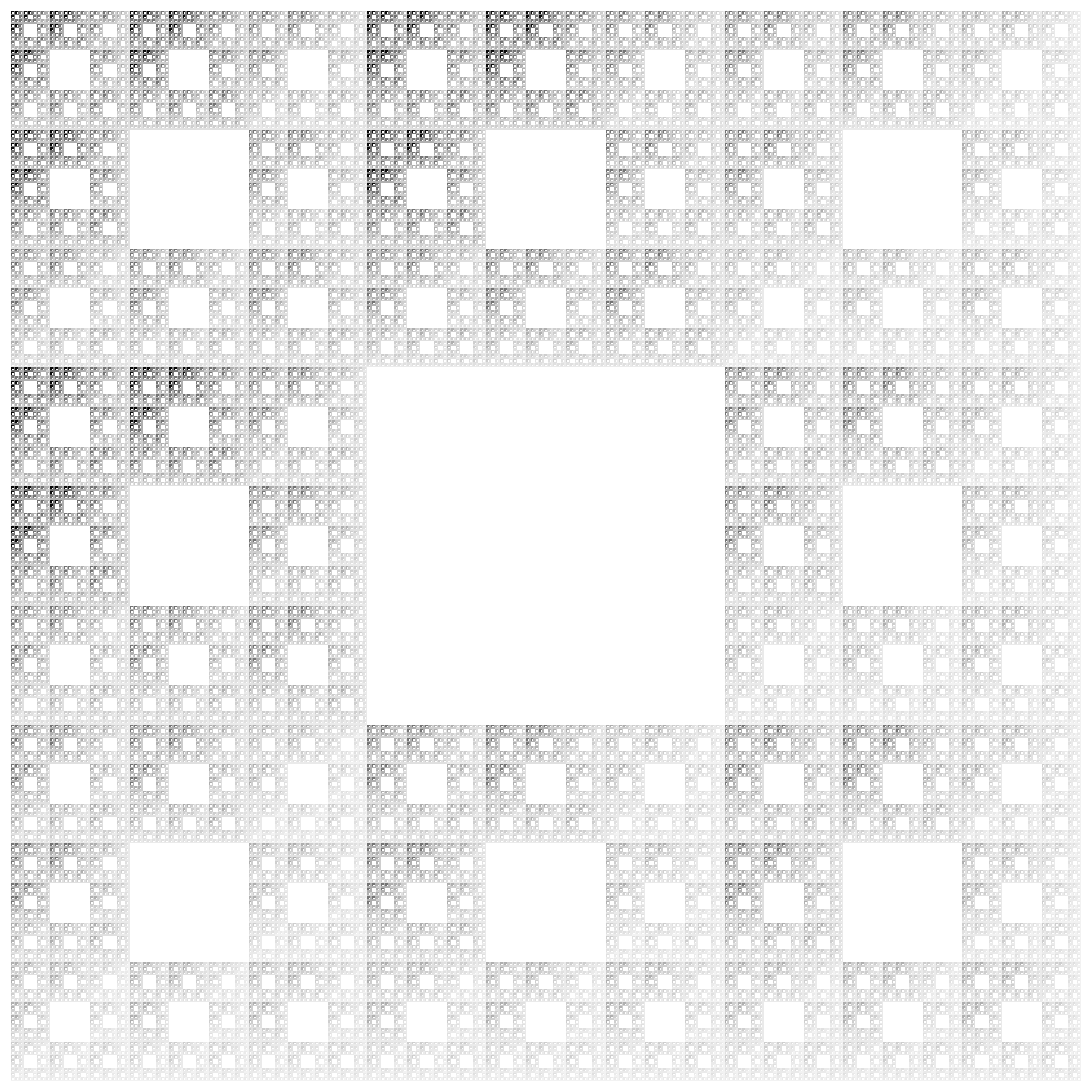}
		\caption{The weighted Sierpi\'nski carpet. The three upper left squares have a mass of 0.2, all other cells have a mass of 0.08.}
	\end{subfigure}
	\caption{Two examples of weighted fractals. A darker color means more weight/mass, a brighter color corresponds to less weight/mass.}%
	\label{fig:gewFraktale}%
\end{figure}%
holds for all Borel sets $A$ with $\operatorname{supp}\mu=K$ and $\mu(K)=1$. In Figure \ref{fig:gewFraktale} are two examples of weighted fractals. Heavier cells are painted in darker color, lighter cells are painted brighter.
\medskip

On the same time we can interpret the weight $m(i)$ as the probability to choose the similitude $S_i$. This leads to an so called probabilistic iterated function scheme. For further informations about (probabilistic) iterated function schemes we refer the reader to \cite{FalconerFG, Falconer1997}.
%
%
%
%
\section{Idea of the transition probability and its consequences}
\label{sec:p}
As a first step we want to define a Markov chain on $\mathcal W$. In order to do this, we have to specify a transition probability $p$ on $\mathcal W\times \mathcal W$. Our purpose is to define the transition probability from one cell to its children in connection with the mass distribution on the alphabet $\mathcal A$.\\
Therefore we consider the idea that the probability of going from $v$ to its child $w$ should be equal to the quotient of the mass in $w$ and the mass we start from, which is the mass of $v$. This means, that we get
\begin{align}%
	p'(v,w)&=\frac{m(w)}{m(v)}\qquad\text{ with $w$ is a child from $v$}\label{eq:HerleitungVonp}
	\intertext{The problem of this definition is indeed, that it would not be a probability measure, since in general $\sum_{w\in\mathcal W}p'(v,w)\neq 1$. Therefore we scale equation \eqref{eq:HerleitungVonp} and get:}
	p(v,w)&=\frac{p'(v,w)}{\sum_{x\in\mathcal{W}}p'(v,x)}\nonumber\\
	\intertext{We now want to clearify, when $w$ is a child from $v$. First, if we have $v\in\mathcal W$, than the word $vi$ with $i\in\mathcal A$ should be a childen of $v$. Second, if we have a conjugated word $\tilde v$ from $v$ we want to identify $\tilde v$ and $v$ as the same. Therefore the children of $\tilde v$, namely by the first thought $\tilde vi$, should also be children of $v$.
	\newline
	In total we get that all children of $v$ are from the shape $w=\tilde vi$ with $\tilde v\sim v$ and $i\in\mathcal A$.
	\newline
	This leads together with equation \eqref{eq:HerleitungVonp} to:}
	&=\frac{\frac{m(w)}{m(v)}}{\sum_{\tilde v\sim v}\sum_{i\in\mathcal A}\frac{m(\tilde vi)}{m(v)}}\nonumber\\
	&=\frac{m(w)}{\sum_{\tilde v\sim v}\sum_{i\in\mathcal A}m(\tilde v)m(i)}\nonumber\\
	&=\frac{m(w)}{\sum_{\tilde v\sim v}m(\tilde v)\sum_{i\in\mathcal A}m(i)}\nonumber\\
	\intertext{since $m$ is a mass distribution over $\mathcal A$, it holds, that $\sum_{i\in\mathcal A}m(i)=1$. So it follows:}
	&=\frac{m(w)}{\sum_{\tilde v\sim v}m(\tilde v)}.\nonumber
\end{align}
In the other case, when $w$ is no child of $v$, we want to set $p(v,w)=0$.\\
\\
This motivates the following Definition.

\begin{defi}%
	\label{def:p}%
	Define the transition probability $p:\mathcal W\times\mathcal W\to[0,1]$ by
	\begin{align*}
		p(v,w)\defgl\begin{cases}
				\frac{m(w)}{\sum_{\hat v\sim v}m(\hat v)},&\text{ if } w=\hat vi\text{ with } \hat v\sim v \text{ and } i \in\mathcal A,\\
				0,&\text{ else.}
			\end{cases}
	\end{align*}
\end{defi}

Using this transition probability we want to denote by $\{X_n\}_{n\geq1}$ the Markov chain on the state space $\mathcal W$. In Figure \ref{fig:pOnSG} this can be seen on the Sierpi\'nski gasket for words up to length 2.
\medskip

\begin{figure}[ht]%
	\centering%
	\includegraphics[page=4,height=15cm]{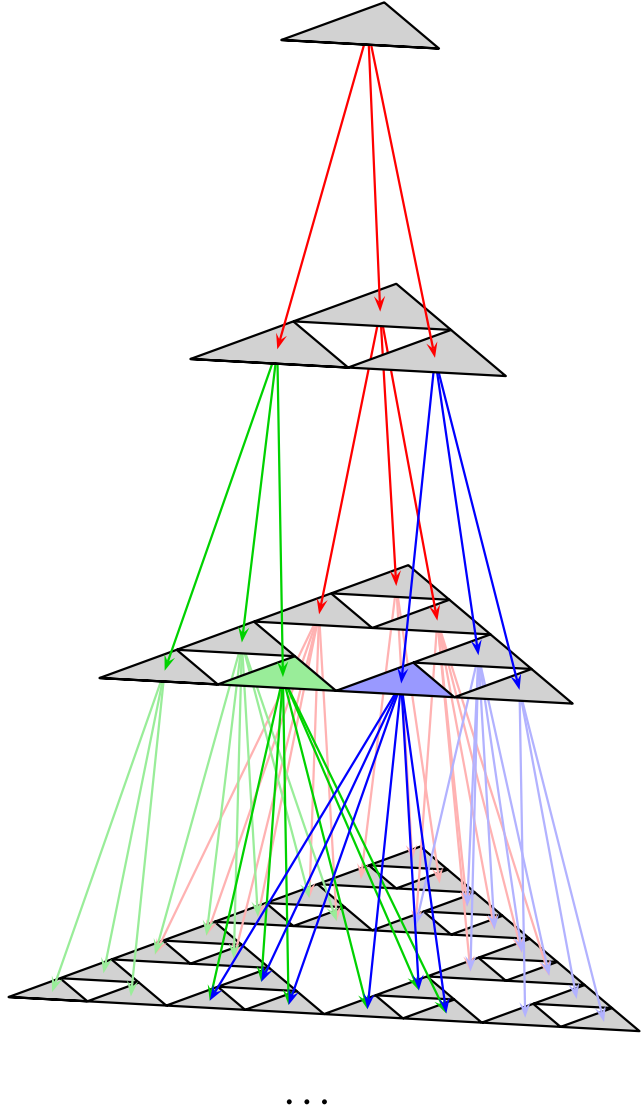}%
	\caption{The transition probability on the Sierpi\'nski gasket.}%
	\label{fig:pOnSG}%
\end{figure}%

Further we want do define the associated Markov operator $P$ by
\begin{align*}%
	(Pf)(v)&\defgl \sum_{w\in\mathcal W}p(v,w)f(w),\qquad v\in\mathcal W
	\intertext{for a nonnegative function $f$ on $\mathcal W$. We call a function $f:\mathcal W\to\mathbb R$ P-harmonic, if}
	(Pf)(v)&=f(v),\qquad v\in\mathcal W
\end{align*}
holds.
\medskip

In order to understand this new definition of the transition probability we first make a observation on a basic property of $p$ in the following Corollary.

\begin{kor}
	\label{kor:pphat}%
	For $v,\tilde v, w\in\mathcal W$ and $\tilde v\sim v$ it holds that
	\begin{align*}
		p(v,w)=p(\tilde v,w).
	\end{align*}
\end{kor}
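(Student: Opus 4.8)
The plan is to unfold both sides of the claimed equality directly from Definition~\ref{def:p} and exploit the fact that the equivalence class of $v$ and the equivalence class of $\tilde v$ coincide when $\tilde v \sim v$. First I would observe that if $w$ is a child of $v$ in the sense of Definition~\ref{def:p}, i.e.\ $w = \hat v i$ for some $\hat v \sim v$ and some $i \in \mathcal A$, then since $\tilde v \sim v$ and $\sim$ is an equivalence relation (Assumption~(A)), we also have $\hat v \sim \tilde v$, so $w$ is a child of $\tilde v$ as well; and conversely. Hence the two transition probabilities $p(v,w)$ and $p(\tilde v, w)$ fall into the same case (nonzero or zero) of the definition simultaneously.

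In the nonzero case, the numerator $m(w)$ is literally the same on both sides, so it remains only to compare the denominators $\sum_{\hat v \sim v} m(\hat v)$ and $\sum_{\hat v \sim \tilde v} m(\hat v)$. Because $\sim$ is an equivalence relation, the sets $\{\hat v : \hat v \sim v\}$ and $\{\hat v : \hat v \sim \tilde v\}$ are exactly the same set — they are both the equivalence class $\eqclass{v} = \eqclass{\tilde v}$ — so the two sums are identical term by term. This gives $p(v,w) = p(\tilde v, w)$ in the nonzero case, and in the zero case both sides are $0$, completing the argument.

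The only subtlety worth spelling out, and the place where Assumption~(A) is genuinely used, is the claim that ``$w$ is a child of $v$'' is a property that depends only on the equivalence class of $v$. Without transitivity of $\sim$ one could imagine $w = \hat v i$ with $\hat v \sim v$ but $\hat v \not\sim \tilde v$ even though $\tilde v \sim v$; under (A) this cannot happen. I expect this to be essentially the whole content of the proof — there is no real obstacle, just careful bookkeeping — and I would present it in three short lines: reduce to comparing denominators, note that the index sets are the common equivalence class, and conclude. (One should also note the finiteness of the sum, guaranteed by $R(v) < \infty$ from Corollary~\ref{cor:R(v)} together with Assumption~(A), so that the denominators are well-defined positive reals, though this is not strictly needed for the equality itself.)
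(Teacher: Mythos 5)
Your proof is correct and is essentially the paper's own argument: the paper's one-line proof likewise reduces everything to the identity $\sum_{u\sim v}m(u)=\sum_{x\sim\tilde v}m(x)$, which holds because under Assumption (A) the equivalence classes of $v$ and $\tilde v$ coincide. Your additional remark that the case distinction (``$w$ is a child of $v$'') is itself class-invariant is a point the paper leaves implicit under ``follows by definition,'' but it is the same reasoning.
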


\begin{proof}
	The Corollary follows by definition and the fact, that $\sum_{u\sim v}m(u) = \sum_{x\sim\tilde v}m(x)$ holds for $\tilde v\sim v$.
\end{proof}

In the next logical step we want to study the transition probability from two arbitrary words $v$ and $w$. The transition probability from $v$ to $w$ is of course only positive, if there is a path between $v$ and $w$ and if $w$ is a successor of $v$.\\
For this, define the $n$-step transition probability from $v\in\mathcal W$ to $w\in\mathcal W$ recursively by
\begin{align*}
	p_n(v,w)\defgl\sum_{u\in\mathcal W}p_{n-1}(v,u)p(u,w),\qquad n\geq 1
\end{align*}
where $p_0(v,w)\defgl\delta_v(w)$ (and where $\delta_v(w)$ is the Kronecker delta function). By obvious reasons it holds, that $p_n(v,w)>0$ only if $d(v,w)=n$ and the following Definition is well defined.

\begin{defi}%
	\label{defi:Green}
	The Green function $g:\mathcal W\times\mathcal W\to \mathbb R$ is defined by 
	\begin{align*}
		g(v,w)\defgl\sum_{n= 0}^\infty p_n(v,w)=p_{|w|-|v|}(v,w),\qquad v,w\in\mathcal W.
	\end{align*}
\end{defi}

Based on the Green function we can observe, if a word $v$ is an ancestor of $w$. Thus we say, that $v$ is an ancestor of $w$, denoted by $v\ll w$, if $g(v,w)>0$ and on the same time we say, that $w$ is a successor of $v$. Further is $v$ a $k$-ancestor of $w$, if $g(v,w)>0$ and $d(v,w)=k$. The set of all $k$-ancestors of $w$ is then defined by \mbox{$\Anc_k(w)\defgl\{v\in\mathcal W:g(v,w)>0\text{ and }d(v,w)=k\}$}. For $w\in\mathcal W^\star$ we define the set of all ancestors through $\Anc(w)\defgl\bigcup_{n=1}^\infty \bigcup_{k=0}^n\Anc_k(w\big|_n)$.
\medskip

These additional notations give us the opportunity to compare our definition of the transition probability with the literature, especially with \cite{LauWang2015}. Since the homogenous case has been already treated, we want to start with a short remark about this case.

\begin{rem}%
	\label{rem:HomogenerFall}
	Definition \ref{def:p} includes the transition probability in the homogenous case, where all weights are equal and thus $m(a)=\frac1N$ for all $a\in\mathcal A$. It follows, that
	\begin{align*}
		p(v,w)=\begin{cases}
				\frac{1}{N\cdot R(v)},&\text{ if } w=\hat vi\text{ with } \hat v\sim v \text{ and } i \in\mathcal A,\\
				0,&\text{ else},
			\end{cases}
	\end{align*}
	since $m(w) = N^{-|w|}$ holds.\\
	The Markov chain with this transition probability is then of DS-type, since all assumptions on $p$ are fulfilled. Transfering the notation of \cite{LauWang2015} to our notation a Markov Chain is of DS-type, if the following five assumptions holding:
	\begin{description}
		\item[(LW1)] $p(v,w)>0$ if $v=w^-$;
		\item[(LW2)] $p(v,w)>0$ implies that either $v=w^-$ or $S_v(K)\cap S_{w^-}(K)\neq\emptyset$;
		\item[(LW3)] $p(v,w)>0$ for any $w$ such that $w^-\sim v$ and $w\sim vk$ for some $k\in\mathcal W$;
		\item[(LW4)] $\inf\{p(v,w):p(v,w)>0, v,w\in\mathcal W\}=: a > 0$;
		\item[(LW5)] there exists a constant $C_0\geq 1$ such that
		\begin{align*}
			\frac{g(\emptyset, w_1)}{g(\emptyset, w_2)}\leq C_0
		\end{align*}
		for any $v\in\mathcal W^\star$ and all $w_1,w_2\in\Anc(v)$ with $|w_1|=|w_2|$.
	\end{description}
	Please note, that Lau and Wang using $v\sim w$ in a slightly other meaning than we do here. They understand by this that two words are neighbors.\\
	This allows us to apply the results of \cite[Theorem 1.2]{LauWang2015} in the homogenous case. It follows, that the (homogenous) Martin boundary is homeomorphic to the self-similar set $K$.
\end{rem}

In the general setting we cannot apply the results of \cite{LauWang2015}, since our Markov chain is not a DS-type Markov chain. The reason for this is, that our transition probability can get arbitrary small and thus does not fulfill assertion (LW4).\\
Therefore we have to consider the Martin boundary theory in total and start with two basic statements.

\begin{lem}[{\cite[Lemma 2.3]{DS2001}}]%
	\label{lemma:DS2001-2.3}
	For any $v,w\in\mathcal W$ and $1\leq k\leq d(v,w)$ we have
	\begin{align*}
		g(v,w)=\sum_{\substack{d(v,u)=k,\\v\ll u\ll w}}g(v,u)g(u,w).
	\end{align*}
\end{lem}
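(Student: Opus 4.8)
The plan is to reduce the identity to the classical Chapman--Kolmogorov equation for the $n$-step transition probabilities and then translate it back into the language of the Green function, using the two elementary facts recorded before Definition \ref{defi:Green}: that $g(v,w)=p_{d(v,w)}(v,w)$ and that $p_m(a,b)\neq 0$ only if $d(a,b)=m$.

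The first step is to establish the auxiliary identity
\begin{align*}
	p_n(v,w)=\sum_{u\in\mathcal W}p_k(v,u)\,p_{n-k}(u,w),\qquad 0\le k\le n,
\end{align*}
by induction on $n-k$. For $n-k=0$ it is immediate from $p_0(u,w)=\delta_u(w)$, and for $n-k=1$ it is exactly the recursive definition of $p_n$ (since $p_1(u,w)=p(u,w)$). For the inductive step one writes $p_{n-k}(u,w)=\sum_{x\in\mathcal W}p_{n-k-1}(u,x)\,p(x,w)$, substitutes this into the right-hand side, and interchanges the two sums. The interchange is legitimate because for fixed $v,w$ only finitely many words $u$ and $x$ carry nonzero terms, as $p_k(v,u)\neq 0$ forces $|u|=|v|+k$ and $p(x,w)\neq 0$ forces $|x|=|w|-1$, and words of a given length are finite in number. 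Applying the induction hypothesis to $\sum_{u}p_k(v,u)\,p_{n-k-1}(u,x)=p_{n-1}(v,x)$ and then the one-step recursion $\sum_{x}p_{n-1}(v,x)\,p(x,w)=p_n(v,w)$ closes the induction.

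Next I would specialize to $n:=d(v,w)$ and a fixed $1\le k\le n$. By Definition \ref{defi:Green} we have $g(v,w)=p_n(v,w)$, so the auxiliary identity yields $g(v,w)=\sum_{u\in\mathcal W}p_k(v,u)\,p_{n-k}(u,w)$. A summand is nonzero precisely when $p_k(v,u)>0$ and $p_{n-k}(u,w)>0$. The first condition means $d(v,u)=k$ and $g(v,u)>0$, i.e. $v\ll u$, and moreover $p_k(v,u)=g(v,u)$ there; the second means $d(u,w)=n-k$ and $g(u,w)>0$, i.e. $u\ll w$, with $p_{n-k}(u,w)=g(u,w)$. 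Conversely, if $d(v,u)=k$ and $v\ll u\ll w$, then $d(u,w)=|w|-|u|=(|w|-|v|)-k=n-k$ automatically by additivity of word lengths, so the index set of the sum is exactly $\{u\in\mathcal W:\ d(v,u)=k,\ v\ll u\ll w\}$. Substituting $p_k(v,u)=g(v,u)$ and $p_{n-k}(u,w)=g(u,w)$ on this set gives the asserted formula.

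I do not expect a real obstacle here; the only points requiring mild care are the justification of the interchange of sums in the inductive step (handled by the vanishing of $p_m(a,b)$ off $d(a,b)=m$) and the verification that the summation index set in the statement coincides exactly with the support of the summand, which rests on the same observation together with the additivity $d(v,w)=d(v,u)+d(u,w)$.
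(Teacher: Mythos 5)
Your argument is correct: the Chapman--Kolmogorov identity for the $p_n$, proved by induction from the one-step recursion, combined with $g(v,w)=p_{d(v,w)}(v,w)$ and the observation that the support of the summand is exactly $\{u:\,d(v,u)=k,\ v\ll u\ll w\}$, gives the lemma, and your care about the finiteness of the sums and the additivity $d(v,w)=d(v,u)+d(u,w)$ is exactly what is needed. Note that the paper itself supplies no proof here --- it imports the statement verbatim from Denker and Sato \cite[Lemma 2.3]{DS2001} --- so there is nothing internal to compare against; your route is the standard one and is essentially the argument of the cited source.
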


This Lemma was first proven by Denker and Sato and is very useful for us. As a special case we get:

\begin{kor}%
	\label{kor:green}
	For any $v,w\in\mathcal W$ it follows
	\begin{align*}
		g(v,w)=\sum_{u\sim w^-}g(v,u)p(u,w)=p(w^-,w)\sum_{u\sim w^-}g(v,u).
	\end{align*} 
\end{kor}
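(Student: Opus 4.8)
The plan is to derive Corollary \ref{kor:green} from Lemma \ref{lemma:DS2001-2.3} by choosing the step size $k$ maximally, namely $k = d(v,w)$, so that the intermediate words $u$ are forced to sit directly above $w$ at level $|w|-1 = |w^-|$. First I would apply Lemma \ref{lemma:DS2001-2.3} with $k = d(v,w)$, which yields
\begin{align*}
	g(v,w) = \sum_{\substack{d(v,u) = d(v,w),\\ v\ll u\ll w}} g(v,u)\, g(u,w).
\end{align*}
The condition $d(v,u) = d(v,w)$ together with $u\ll w$ forces $|u| = |w|-1$, i.e.\ $u$ is a word of the same length as $w^-$ that is an ancestor of $w$. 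The key observation is then that $g(u,w) = p_1(u,w) = p(u,w)$ for such $u$ (using the identity $g(v,w) = p_{|w|-|v|}(v,w)$ from Definition \ref{defi:Green}), so the sum becomes $\sum_{u} g(v,u)\, p(u,w)$ where $u$ ranges over the length-$|w^-|$ ancestors of $w$ with $p(u,w) > 0$.

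The main step is to identify this index set with $\{u : u\sim w^-\}$. By Definition \ref{def:p}, $p(u,w) > 0$ holds exactly when $w = \hat u i$ for some $\hat u\sim u$ and some $i\in\mathcal A$; in particular $w^- = \hat u$ for that representative, so $u\sim \hat u = w^-$, and conversely if $u\sim w^-$ then $w = w^- \tau(w)$ exhibits $w$ as a child of a word equivalent to $u$, giving $p(u,w) = p(w^-,w) > 0$ by Corollary \ref{kor:pphat} (since $p(u,w) = p(w^-,w)$ for $u\sim w^-$). One should double-check here that every $u$ with $u\sim w^-$ automatically satisfies $v\ll u$ whenever $g(v,w) > 0$; this is where a little care is needed, but it follows because $g(v,w) = \sum_u g(v,u) p(u,w)$ has only nonnegative terms, so any $u\sim w^-$ contributing a positive $p(u,w)$ but with $g(v,u) = 0$ simply drops out of the sum harmlessly — equivalently one takes the sum over all $u\sim w^-$ with the convention that terms with $g(v,u) = 0$ vanish. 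Thus
\begin{align*}
	g(v,w) = \sum_{u\sim w^-} g(v,u)\, p(u,w).
\end{align*}

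Finally, for the second equality I would pull $p(u,w)$ out of the sum: since $u\sim w^-$, Corollary \ref{kor:pphat} gives $p(u,w) = p(w^-,w)$, a quantity independent of $u$, so
\begin{align*}
	g(v,w) = \sum_{u\sim w^-} g(v,u)\, p(w^-,w) = p(w^-,w) \sum_{u\sim w^-} g(v,u).
\end{align*}
I expect the main (mild) obstacle to be the bookkeeping around the index set: making sure that restricting Lemma \ref{lemma:DS2001-2.3}'s condition ``$d(v,u) = d(v,w)$ and $v\ll u\ll w$'' really coincides, up to terms that are zero anyway, with the cleaner condition ``$u\sim w^-$'', and that the edge case $d(v,w) = 0$ (where $v = w$ and both sides are interpreted appropriately, or the statement is read for $d(v,w)\geq 1$) is handled. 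Everything else is a direct substitution using $g(u,w) = p(u,w)$ when $|w| - |u| = 1$ and the equivalence-invariance of $p$ from Corollary \ref{kor:pphat}.
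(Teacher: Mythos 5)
Your overall strategy is exactly the paper's --- apply Lemma \ref{lemma:DS2001-2.3} to isolate the last step, identify the intermediate level with the equivalence class of $w^-$, and pull out $p(u,w)=p(w^-,w)$ via Corollary \ref{kor:pphat} --- but your choice of $k$ is off by one, and the step as written fails. With the paper's convention $d(v,u)=|u|-|v|$, the condition $d(v,u)=d(v,w)$ forces $|u|=|w|$, not $|u|=|w|-1$; combined with $u\ll w$ (i.e.\ $g(u,w)=p_0(u,w)=\delta_u(w)>0$) this forces $u=w$, so taking $k=d(v,w)$ in Lemma \ref{lemma:DS2001-2.3} merely returns the trivial identity $g(v,w)=g(v,w)\cdot g(w,w)$. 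The correct choice, and the one the paper makes, is $k=d(v,w)-1$, which places the intermediate words at level $|w|-1=|w^-|$ and gives $g(u,w)=p_1(u,w)=p(u,w)$.

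Once $k$ is corrected, the rest of your argument is sound and coincides with the paper's (which is much terser): $p(u,w)>0$ with $|u|=|w|-1$ is equivalent to $u\sim w^-$ by Definition \ref{def:p}, terms with $g(v,u)=0$ drop out harmlessly so the sum may be taken over all $u\sim w^-$, and Corollary \ref{kor:pphat} lets you replace $p(u,w)$ by the constant $p(w^-,w)$. Your attention to the index-set bookkeeping is the right instinct. Note also that $k=d(v,w)-1$ lies in the admissible range $1\leq k\leq d(v,w)$ only when $d(v,w)\geq 2$; the case $d(v,w)=1$ must be checked directly, where $g(v,u)=\delta_v(u)$ makes both sides equal to $p(v,w)$ when $v\sim w^-$ and to $0$ otherwise --- the paper glosses over this edge case as well.
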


\begin{proof}
	The first step of the Corollary follows with Lemma \ref{lemma:DS2001-2.3} with $k=d(v,w)-1$ such that the sum is over all $u\sim w^-$. The second step follows by using Corollary \ref{kor:pphat}.
\end{proof}

We want to pick up the idea of the transition probability again but now we take a look at the $n$-step transition probabilities or equivalent the Green function. For this, we consider the mass distribution, which has a multiplicative structure on it and $m(w_1\cdots w_n)=m(w_1)\cdots m(w_n)$ holds. The mass $m(w_i)$ corresponds to the probability of choosing the similitude $S_{w_i}$ and on the same way corresponds $m(w)$ to the probability of choosing the similitude $S_w$. This can be also understood as the probability to pick the cell $w$ starting from $\emptyset$. %
The next theorem proves, that this relation holds.

\begin{theorem}%
	\label{theo:green}
	For all $w\in\mathcal W$
	\begin{align}
		g(\emptyset,w)=m(w).\label{eq:theo:green}
	\end{align}
	holds.
\end{theorem}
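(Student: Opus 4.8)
The plan is to prove the identity $g(\emptyset,w)=m(w)$ by induction on the length $|w|$ of the word $w$, using the recursion for the Green function from Corollary \ref{kor:green} together with the explicit form of the transition probability in Definition \ref{def:p}.

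\textbf{Base case.} For $|w|=0$ we have $w=\emptyset$, and $g(\emptyset,\emptyset)=p_0(\emptyset,\emptyset)=1=m(\emptyset)$, which is exactly the convention $m(\emptyset)=1$.

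\textbf{Induction step.} Suppose \eqref{eq:theo:green} holds for all words of length $n$, and let $w$ have length $n+1$, so $w=w^- i$ for some $i\in\mathcal A$ with $|w^-|=n$. By Corollary \ref{kor:green},
\begin{align*}
	g(\emptyset,w)=p(w^-,w)\sum_{u\sim w^-}g(\emptyset,u).
\end{align*}
Every $u$ with $u\sim w^-$ has $|u|=|w^-|=n$, so by the induction hypothesis $g(\emptyset,u)=m(u)$, giving $\sum_{u\sim w^-}g(\emptyset,u)=\sum_{u\sim w^-}m(u)$. On the other hand, since $w=w^- i$ with $i\in\mathcal A$ and $w^-\sim w^-$, Definition \ref{def:p} yields $p(w^-,w)=\dfrac{m(w)}{\sum_{\hat v\sim w^-}m(\hat v)}$. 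Substituting both, the sum $\sum_{u\sim w^-}m(u)$ cancels against the denominator of $p(w^-,w)$, leaving $g(\emptyset,w)=m(w)$, as desired. This closes the induction.

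\textbf{Main obstacle.} The argument is essentially bookkeeping, and the only subtle point is ensuring that the set $\{u:u\sim w^-\}$ appearing in the Green-function recursion is literally the same finite index set as the set $\{\hat v:\hat v\sim w^-\}$ appearing in the denominator of $p(w^-,w)$; this is where Assumption (A) (that $\sim$ is an equivalence relation) and Corollary \ref{cor:R(v)} (finiteness of $R(w^-)$) are used, so that all sums are finite and the cancellation is legitimate. One should also note that $g(\emptyset,u)$ for $u\sim w^-$ is indeed nonzero precisely when $u\in\Anc_n(w^-\!\text{-successors})$, but since the induction hypothesis gives $g(\emptyset,u)=m(u)>0$ for every such $u$ anyway, no case distinction is needed — every word of length $n$ is a successor of $\emptyset$, consistent with $g(\emptyset,u)=m(u)>0$.
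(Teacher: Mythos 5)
Your proof is correct and follows essentially the same route as the paper's: induction on $|w|$, with the base case $g(\emptyset,\emptyset)=1=m(\emptyset)$ and the induction step combining Corollary \ref{kor:green} with the explicit formula for $p(w^-,w)$ so that the sum $\sum_{u\sim w^-}m(u)$ cancels. The additional remarks about finiteness of the index set are harmless but not needed beyond what the paper already assumes.
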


\begin{proof}
	We will show the statement by induction over $|w|$ with $w\in\mathcal W$.\\
	Consider the case $|w|=0$. The only word with length $0$ is the empty word $\emptyset$. It follows, that
	\begin{align*}
		g(\emptyset,\emptyset)&=p_0(\emptyset,\emptyset)=1
	\end{align*}
	holds. Further is $m(\emptyset)=1$, so that the statement of the Theorem holds.\\
	Now we take a look on the induction step. The statement \eqref{eq:theo:green} should hold for all words of length $l$. We choose $u\in\mathcal W$ with $|u|=l$ and $i\in\mathcal A$. Now consider $w=ui$ with $|w|=|ui|=l+1$. First we apply 
	Corollary \ref{kor:green} and get:
	\begin{align*}
		g(\emptyset, ui)&=p(u, ui)\sum_{\hat u\sim u}g(\emptyset,\hat u).\\
		\intertext{By induction and definition of $p(u,ui)$ it follows:}
		&=\frac{m(ui)}{\sum_{\hat u\sim u}m(\hat u)}\sum_{\hat u\sim u}m(\hat u)=m(ui).
	\end{align*}
	This proves the statement.
\end{proof}

It seems to be very hard, to calculate $g(v,w)$ for arbitrary $v,w\in\mathcal W$. By calculating some values of $g(v,w)$ one gets the impression, that there is some kind of inner structure which motivates us do define the function $q$.

\begin{defi}%
	Define the function $q:\mathcal W\times\mathcal W\to[0,1]$ by
	\begin{align*}
		q(v,w)=
		\begin{cases}
			\frac{g(v,w)}{m(w)}\sum_{\hat v\sim v}m(\hat v),&\text{ if }v\neq w,\\
			1,&\text{ if }v=w.
		\end{cases}
	\end{align*}
\end{defi}

The function $q$ measures in some sense, how the Green function differs from the quotient of the mass of both points. On the first view this seems to be without benefit. Nevertheless we examine some properties of $q$. As we will see, we are able to calculate the value of $q(v,w)$ independent from $g(v,w)$ by recursion. 

\begin{lem}%
\label{lemma:q-Eigenschaften}
	Let $v,w\in\mathcal W$ and $i,j\in\mathcal A$. The function $q$ fulfills then the following properties:
	\begin{enumerate}[a)]
		\item $g(v,w)=q(v,w)\frac{m(w)}{\sum_{\hat v\sim v}{m(\hat v)}}$ if $v\neq w$,
		\item $q(v,\hat vi)=1$ for $\hat v\sim v$,
		\item $q(v,wi)=\frac{\sum_{\hat w\sim w}q(v,\hat w)m(\hat w)}{\sum_{\hat w\sim w}m(\hat w)}$ if $v\not\sim w$,
		\item $q(v,wj)= q(v,wi)=q(v,\hat wi)=q(v,\hat wj)$ for $\hat w\sim w$.
	\end{enumerate}
\end{lem}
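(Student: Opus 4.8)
The plan is to prove the four parts in the order a)–d), using part a) as the bridge between $g$ and $q$, and then proving c) and d) simultaneously by induction on $|w|$ (equivalently on $d(v,w)$), mirroring the induction used in Theorem \ref{theo:green}.

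First I would dispose of a) and b). Part a) is nothing but the defining formula for $q$ solved for $g(v,w)$; for $v\neq w$ it is an immediate rearrangement, and when $v=w$ both sides need not agree so the hypothesis $v\neq w$ is exactly what is needed. Part b) says $q(v,\hat v i)=1$ for $\hat v\sim v$: here I would apply the definition of $q$ with $w=\hat v i$ (noting $\hat v i\neq v$ since the lengths differ), so $q(v,\hat v i)=\frac{g(v,\hat v i)}{m(\hat v i)}\sum_{\hat v'\sim v}m(\hat v')$, and then invoke Corollary \ref{kor:green} together with Corollary \ref{kor:pphat} to get $g(v,\hat v i)=p(\hat v^-,\hat v i)$-type expansion — more directly, $g(v,\hat v i)=p(v,\hat v i)\sum_{\hat v'\sim v}g(\dots)$... actually the cleanest route is: since $\hat v i$ is a child of $v$, $d(v,\hat v i)=1$, so $g(v,\hat v i)=p_1(v,\hat v i)=p(v,\hat v i)=\frac{m(\hat v i)}{\sum_{\hat v'\sim v}m(\hat v')}$ by Definition \ref{def:p}; substituting this into the formula for $q$ gives exactly $1$.

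For c), assume $v\not\sim w$ and consider the child $wi$. I would start from Corollary \ref{kor:green} applied to the pair $(v,wi)$, which gives $g(v,wi)=p(w,wi)\sum_{\hat w\sim w}g(v,\hat w)$. Now I rewrite each $g(v,\hat w)$ using part a): since $v\not\sim w$ forces $v\neq\hat w$ for every $\hat w\sim w$ (their equivalence class is disjoint from that of $v$, or at least $v$ is not among them), we get $g(v,\hat w)=q(v,\hat w)\frac{m(\hat w)}{\sum_{\hat v\sim v}m(\hat v)}$. Also $p(w,wi)=\frac{m(wi)}{\sum_{\hat w\sim w}m(\hat w)}$. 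Plugging everything into the definition $q(v,wi)=\frac{g(v,wi)}{m(wi)}\sum_{\hat v\sim v}m(\hat v)$ (valid since $v\neq wi$), the factors $m(wi)$, $\sum_{\hat v\sim v}m(\hat v)$ and the denominator $\sum_{\hat w\sim w}m(\hat w)$ all cancel appropriately, leaving $q(v,wi)=\frac{\sum_{\hat w\sim w}q(v,\hat w)m(\hat w)}{\sum_{\hat w\sim w}m(\hat w)}$. Finally, for d) I would argue that the right-hand side of c) depends on $w$ only through its equivalence class $\{\hat w:\hat w\sim w\}$ and not on the last letter $i$ at all; hence $q(v,wi)=q(v,wj)$ for any $i,j\in\mathcal A$, and replacing $w$ by an equivalent $\hat w$ leaves the sum over the equivalence class unchanged, giving $q(v,\hat w i)=q(v,w i)$. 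This needs the case $v\sim w$ too; there one uses part b) directly, since then every $wi$ is a child of (something equivalent to) $v$ and $q(v,wi)=1$ regardless of $i$ or the representative.

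The main obstacle I anticipate is bookkeeping around the equivalence relation in part c): one must be careful that $v\not\sim w$ genuinely implies $v\neq\hat w$ for all $\hat w\sim w$ so that part a) applies to every summand, and that the recursion in c) is well-founded — i.e. that the words $\hat w$ appearing on the right all have strictly smaller distance to $v$ than $wi$ does, so the lemma can legitimately be read as a recursive computation of $q$. Establishing d) as a clean corollary of c) and b), rather than proving it by a separate induction, is the other point requiring a little care, since d) is implicitly what makes the recursion in c) self-consistent (the sum $\sum_{\hat w\sim w}q(v,\hat w)m(\hat w)$ only makes sense to iterate because $q(v,\cdot)$ is constant on equivalence classes).
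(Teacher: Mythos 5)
Your proposal is correct and follows essentially the same route as the paper: a) by rearranging the definition, b) via $g(v,\hat v i)=p(v,\hat v i)$, c) by combining Corollary \ref{kor:green} with part a) and cancelling, and d) as an immediate consequence of c) (resp.\ b) when $v\sim w$). The induction you announce at the outset is never actually needed, since the direct computation you then give is exactly the paper's argument.
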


\begin{proof}
	The assertion a) follows by definition, but can be sometimes very useful.\\
	Now we take a look on the other assertions.\\
	We want now to prove $q(v, \hat vi)=1$ and for this, consider $v,\hat v\in\mathcal W$ with $v\sim \hat v$ and $i\in\mathcal A$. It holds, that
	\begin{align*}
		g(v,\hat vi)&=p(v,\hat vi)=\frac{m(\hat vi)}{\sum_{u\sim v}m(u)}.
		\intertext{If we insert this into the definition of $q$, it follows that:}
		q(v,\hat vi)&=\frac{g(v,\hat vi)\sum_{u\sim v}m(u)}{m(\hat vi)}\\
		&=\frac{\frac{m(\hat vi)}{\sum_{u\sim v}m(u)}\sum_{u\sim v}m(u)}{m(\hat vi)}=1
		\intertext{which proves property b).\newline
		Let us now prove assertion c). For this, let $w\in\mathcal W$ and $w\not\sim v$. By definition of $q$ and Corollary \ref{kor:green} it follows:}
		q(v,wi)&=g(v,wi)\frac1{m(wi)}\sum_{\hat v\sim v}m(\hat v)\\
		&=p(w,wi)\sum_{\hat w\sim w}g(v,\hat w)\frac{1}{m(wi)}\sum_{\hat v\sim v}m(\hat v).\\
		\intertext{Inserting the definition of $p(w,wi)$ and using Lemma \ref{lemma:q-Eigenschaften} a) for $g(v,\hat w)$ this leads to:}
		&=\frac{m(wi)}{\sum_{\hat w\sim w}m(\hat w)}\sum_{\hat w\sim w}
		  \frac{q(v,\hat w)m(\hat w)}{\sum_{\hat v\sim v}m(\hat v)}
		  \frac{1}{m(wi)}\sum_{\hat v\sim v}m(\hat v)\\
		&=\frac{\sum_{\hat w\sim w}q(v,\hat w)m(\hat w)}{\sum_{\hat w\sim w}m(\hat w)}.
	\end{align*}
	The property d) follows for $v\not\sim w$ immediately with property c) and for $v\sim w$ with property b).
\end{proof}

In order to prove a strong result on $q$ we first need the following Proposition.

\begin{prop}%
	\label{prop:126}%
	Let $v,w,\hat w\in\mathcal W$ and $\hat w\sim w$. If
	\begin{align}
		w^-\sim (\hat w)^-
		\label{bed:Lemma126}
	\end{align}
	holds, then 
	\begin{align*}
		q(v,w)=q(v,\hat w)
	\end{align*}
	follows.
\end{prop}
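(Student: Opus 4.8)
The plan is to peel the last letter off $w$ and off $\hat w$ and to recast the claim as an identity concerning the common equivalence class of $w^-$ and $(\hat w)^-$, using the recursion for $q$ from Lemma~\ref{lemma:q-Eigenschaften}. Write $u\defgl w^-$, $\hat u\defgl(\hat w)^-$ and $i\defgl\tau(w)$, $j\defgl\tau(\hat w)$, so that $w=ui$, $\hat w=\hat uj$ and, by hypothesis, $u\sim\hat u$. Since Assumption (A) makes $\sim$ an equivalence relation, $u\sim\hat u$ means that $u$ and $\hat u$ lie in the same class; in particular $\{x\in\mathcal W:x\sim u\}=\{x\in\mathcal W:x\sim\hat u\}$ (a finite set, by Corollary~\ref{cor:R(v)}), and $v\sim u$ holds if and only if $v\sim\hat u$. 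We may assume $v\neq w$ and $v\neq\hat w$; then, if $|v|\geq|w|$, both $g(v,w)$ and $g(v,\hat w)$ vanish, so $q(v,w)=q(v,\hat w)=0$ and there is nothing to prove. Hence from now on $|v|<|w|$, and I would split according to whether $v\sim u$.

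Suppose first that $v\sim u$, hence also $v\sim\hat u$. Then $w=ui$ has the form ``$\hat v i$ with $\hat v\sim v$'' occurring in Lemma~\ref{lemma:q-Eigenschaften}~b), so $q(v,w)=q(v,ui)=1$; the same reasoning gives $q(v,\hat w)=q(v,\hat uj)=1$. Hence $q(v,w)=q(v,\hat w)$ in this case.

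Suppose now that $v\not\sim u$, hence also $v\not\sim\hat u$. Applying Lemma~\ref{lemma:q-Eigenschaften}~c) once with base word $u$ and once with base word $\hat u$ yields
\begin{align*}
	q(v,w)=q(v,ui)&=\frac{\sum_{x\sim u}q(v,x)\,m(x)}{\sum_{x\sim u}m(x)},\\
	q(v,\hat w)=q(v,\hat uj)&=\frac{\sum_{x\sim\hat u}q(v,x)\,m(x)}{\sum_{x\sim\hat u}m(x)}.
\end{align*}
Because $u$ and $\hat u$ determine one and the same equivalence class, the two numerators, and likewise the two denominators, are sums over the same finite index set with identical summands. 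Thus the right-hand sides coincide and $q(v,w)=q(v,\hat w)$, which finishes the argument.

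I do not expect a real obstacle here: once the last letters are stripped off, part c) of Lemma~\ref{lemma:q-Eigenschaften} already expresses $q(v,ui)$ in closed form through data attached only to the class of $u$, so no induction is needed and the whole proof is a direct computation. The one point that has to be handled with some care is the appeal to Assumption (A): transitivity of $\sim$ is exactly what guarantees that ``$x\sim u$'' and ``$x\sim\hat u$'' describe the same family of words, and that the case distinction $v\sim u$ / $v\not\sim u$ is insensitive to replacing $u$ by $\hat u$; without it the two closed forms displayed above could genuinely differ.
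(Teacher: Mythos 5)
Your proof is correct and, at bottom, follows the same idea as the paper: both reduce $q(v,\cdot)$ at a child to a quantity that depends only on the equivalence class of the parent, and then use $w^-\sim(\hat w)^-$ (together with transitivity of $\sim$) to identify the two classes. The only cosmetic difference is that you route the computation through the packaged recursion of Lemma~\ref{lemma:q-Eigenschaften}~b) and~c), which forces the extra case split on $v\sim w^-$ versus $v\not\sim w^-$, whereas the paper performs the same unfolding of $g(v,w)$ directly via Corollary~\ref{kor:green} and so treats both cases uniformly.
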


\begin{proof}
	For simplicity we write $u\defgl w^-$ so that $w=u\tau(w)$ holds. For $\tilde w\sim w$ we write  $\tilde u=(\tilde w)^-$ so that $\tilde w=\tilde u\tau(\tilde w)$. By assumption it follows, that $\tilde u\sim u$.\\
	By definition it follows that
	\begin{align*}
		q(v,w)&=\frac{g(v,w)}{m(w)\sum_{\hat v\sim v}m(\hat v)}\\
		\intertext{and with Corollary \ref{kor:green} it follows that}
		&=p(u,w)\sum_{\hat u\sim u}g(v,\hat u)\frac1{m(w)\sum_{\hat v\sim v}m(\hat v)}\\
		&=\frac{m(w)}{\sum_{\hat u\sim u}m(\hat u)}\sum_{\hat u\sim u}g(v,\hat u)\frac1{m(w)\sum_{\hat v\sim v}m(\hat v)}\\
		&=\frac1{\sum_{\hat u\sim u}m(\hat u)}\sum_{\hat u\sim u}g(v,\hat u)\frac1{\sum_{\hat v\sim v}m(\hat v)}\\
		\intertext{holds. Since by assumption $\tilde u\sim u$ holds, it follows}
		&=\frac1{\sum_{\hat u\sim \tilde u}m(\hat u)}\sum_{\hat u\sim \tilde u}g(v,\hat u)\frac1{\sum_{\hat v\sim v}m(\hat v)}\\
		\intertext{and by doing the same calulus as before, we get}
		&=\frac{m(\tilde w)}{\sum_{\hat u\sim \tilde u}m(\hat u)}\sum_{\hat u\sim \tilde u}g(v,\hat u)\frac1{m(\tilde w)\sum_{\hat v\sim v}m(\hat v)}\\
		&=p(\tilde u, \tilde w)\sum_{\hat u\sim \tilde u}g(v,\hat u)\frac1{m(\tilde w)\sum_{\hat v\sim v}m(\hat v)}\\
		&=\frac{g(v,\tilde w)}{m(\tilde w)\sum_{\hat v\sim v}m(\hat v)}\\
		&=q(v,\tilde w).
	\end{align*}
\end{proof}

We already made a short precondition in Proposition \ref{prop:126} and we want to introduce a second precondition, such that for all $w\in\mathcal W$ one of both preconditions hold. We want to assume this for the rest of the paper and for a clear structure, we put them in an assumption. To differ those assumptions from \cite{LauWang2015}, we want to note them with a "B" at the beginning.

\begin{assumpt}
	We make the following assumptions for the rest of the paper:
	\begin{description}
		\item[(B1)] The Martin kernel in the homogenous case exists.
		\item[(B2)] For all $w\in\mathcal W$ holds either
			\begin{align*}
				m(w)&=m(\tilde w)~\forall \tilde w\sim w\\
				&\text{or}\\
				w^-&\sim(\tilde w)^-~\forall\tilde w\sim w.
			\end{align*}
	\end{description}
\end{assumpt}

We can easily see, that these assumptions are fulfilled by the Sierpi\'nski gasket and his higher-dimensional analogon. This is very important, otherwise it could be possible, that our assumptions are too restrictive and therefore cannot fulfilled by any fractal.\\
Further is the assumption (B1) in view of Remark \ref{rem:HomogenerFall} unnecessary. Since we didn't calculate the homgenous Martin kernel, we still want to hold on (B1).
\medskip 

Using those assumptions on the structure, we are now able to state and prove the following Theorem.
\begin{theorem}%
	\label{theorem:151}%
	Under assumption (B2) it holds, that 
	\begin{align}
		q(v,wi)&=\frac{1}{R(w)}\sum_{\tilde w\sim w}q(v,\tilde w)\label{eq:theorem:151}.
	\end{align}
	In particular is $q$ independent from $m$.
\end{theorem}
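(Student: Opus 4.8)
The statement to prove is that, under (B2), $q(v,wi) = \frac{1}{R(w)}\sum_{\tilde w\sim w} q(v,\tilde w)$, and in particular $q(v,wi)$ does not depend on the mass distribution $m$. The natural approach is a case distinction governed exactly by the dichotomy in (B2): for a fixed $w$, either all $\tilde w\sim w$ have equal mass $m(\tilde w)=m(w)$, or all of them satisfy $w^-\sim(\tilde w)^-$. I would first dispose of the trivial sub-case $v\sim w$: by Lemma \ref{lemma:q-Eigenschaften} b) we have $q(v,\tilde w i)=1$ and also $q(v,\hat w)=1$ for every $\hat w\sim w$ (since $\hat w\sim w\sim v$), so both sides equal $1$. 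So assume $v\not\sim w$ and apply Lemma \ref{lemma:q-Eigenschaften} c), which gives $q(v,wi)=\frac{\sum_{\hat w\sim w} q(v,\hat w)m(\hat w)}{\sum_{\hat w\sim w} m(\hat w)}$ — a mass-weighted average of the values $q(v,\hat w)$.

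\textbf{First case of (B2): equal masses.} If $m(\hat w)=m(w)$ for all $\hat w\sim w$, then the mass factors cancel out of numerator and denominator of the expression from Lemma \ref{lemma:q-Eigenschaften} c): $q(v,wi)=\frac{m(w)\sum_{\hat w\sim w} q(v,\hat w)}{m(w)\,R(w)}=\frac{1}{R(w)}\sum_{\hat w\sim w} q(v,\hat w)$, which is exactly \eqref{eq:theorem:151}, and the mass has disappeared.

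\textbf{Second case of (B2): $w^-\sim(\hat w)^-$ for all $\hat w\sim w$.} Here I would invoke Proposition \ref{prop:126}: the hypothesis $w^-\sim(\hat w)^-$ is precisely condition \eqref{bed:Lemma126}, so $q(v,\hat w)=q(v,w)$ for every $\hat w\sim w$. Then the weighted average in Lemma \ref{lemma:q-Eigenschaften} c) is an average of a constant, hence equals that constant: $q(v,wi)=q(v,w)\cdot\frac{\sum_{\hat w\sim w} m(\hat w)}{\sum_{\hat w\sim w} m(\hat w)}=q(v,w)$. On the other hand the right-hand side of \eqref{eq:theorem:151} is $\frac{1}{R(w)}\sum_{\tilde w\sim w} q(v,\tilde w)=\frac{1}{R(w)}\cdot R(w)\,q(v,w)=q(v,w)$, so the two sides agree. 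The "independence from $m$" claim then follows by induction on word length: $q(v,\cdot)$ on words of length $|w|+1$ is expressed via \eqref{eq:theorem:151} purely in terms of $q(v,\tilde w)$ for $\tilde w$ of length $|w|$ and the combinatorial quantity $R(w)$, none of which involve $m$; the base case is Lemma \ref{lemma:q-Eigenschaften} b) together with $q(v,v)=1$.

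\textbf{Main obstacle.} The computation itself is short; the only real subtlety is making sure the dichotomy in (B2) is applied to the correct word. One must be careful that (B2) is a statement about $w$ and its $\sim$-class of equal length, and that in Lemma \ref{lemma:q-Eigenschaften} c) the sum runs over exactly that class (the $R(w)$ words $\hat w\sim w$); checking that $R(\hat w)=R(w)$ and $\sum_{\hat u\sim\hat w}m(\hat u)=\sum_{\hat u\sim w}m(\hat u)$ for $\hat w\sim w$ — which is implicit already in Corollary \ref{kor:pphat} and its proof — is what makes the two cases clean. A secondary point worth stating explicitly is why one of the two alternatives in (B2) always holds for the relevant $w$, but that is assumed outright for the rest of the paper, so no work is needed there. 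I expect no genuine difficulty beyond this bookkeeping.
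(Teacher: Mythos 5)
Your main argument is exactly the paper's: split according to the dichotomy in (B2), use Lemma \ref{lemma:q-Eigenschaften}~c) to write $q(v,wi)$ as the mass-weighted average of the $q(v,\hat w)$, cancel the masses in the first case, and invoke Proposition \ref{prop:126} to make the average constant in the second case. That part is correct and the induction you sketch for the independence from $m$ is a reasonable (slightly more explicit) version of the paper's one-line remark.

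However, your ``trivial sub-case'' $v\sim w$ is wrong. Lemma \ref{lemma:q-Eigenschaften}~b) gives $q(v,\tilde w i)=1$ for $\tilde w\sim v$, but it says nothing about $q(v,\hat w)$ for $\hat w\sim v$ with $|\hat w|=|v|$: there $g(v,\hat w)=p_0(v,\hat w)=\delta_v(\hat w)=0$ whenever $\hat w\neq v$, so $q(v,\hat w)=0$, not $1$. Hence the right-hand side of \eqref{eq:theorem:151} equals $\frac{1}{R(w)}\bigl(q(v,v)+0+\cdots+0\bigr)=\frac{1}{R(w)}$, while the left-hand side is $1$; the identity actually fails for $v\sim w$ as soon as $R(w)>1$. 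The paper sidesteps this by opening its proof with ``Consider $v,w\in\mathcal W$ with $v\not\sim w$'', i.e.\ the statement is implicitly restricted to that case (which is also the only case in which it is used later). You should delete the $v\sim w$ sub-case and instead note the restriction.
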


\begin{proof}
	Consider $v,w\in\mathcal W$ with $v\not\sim w$.\\
	If $w\in\mathcal W$ and all $\tilde w\sim w$ fulfill $m(w)=m(\tilde w)$, then it follows with Lemma \ref{lemma:q-Eigenschaften} c):
	\begin{align*}
		q(v,wi)&=\frac{\sum_{\hat w\sim w}q(v,\hat w)m(\hat w)}{\sum_{\hat w\sim w}m(\hat w)}\\
		&=\frac{m(w)}{\sum_{\hat w \sim w}m(w)}\sum_{\hat w\sim w}q(v, \hat w)\\
		&=\frac1{R(w)}\sum_{\hat w\sim w}q(v,\hat w).
		\intertext{If on the other hand holds $w^-\sim(\tilde w)^-$ for all $\tilde w\sim w$, then follows by Proposition \ref{prop:126}, that $q(v,w)=q(v,\tilde w)$ for all $\tilde w\sim w$. Using this, it follows:}
		q(v,wi)&=\frac{\sum_{\hat w\sim w}q(v,\hat w)m(\hat w)}{\sum_{\hat w\sim w}m(\hat w)}\\
		&=\frac{q(v,w)\sum_{\hat w\sim w}m(\hat w)}{\sum_{\hat w\sim w}m(\hat w)}\\
		&=q(v,w)=\frac1{R(w)}\sum_{\hat w\sim w}q(v,\hat w).
	\end{align*}
	The independence of $q$ and $m$ follows immediately through the representation of $q$ by equation \eqref{eq:theorem:151}.
\end{proof}

The fact that $q$ is independent from $m$ is very important and allows us to later, to calculate the Martin kernel in the inhomogenous case.

\begin{bsp}
	As a wide class of examples we want to take a look at the (higher-dimensional) Sierpi\'nski gaskets. In $\mathbb R^2$ this is the \emph{normal} Sierpi\'nski gasket, which we already introduced in figure \ref{fig:CellsSG} and in $\mathbb R^3$ this is the so called Sierpi\'nski tetrahedron. In figure \ref{fig:ST} is the construction of the Sierpi\'nski tetrahedron, where the inner part of each tetrahedron is removed such that it becomes four tetrahedra connected only on the vetrices.\\
	\begin{figure}[ht]%
		\centering%
		\includegraphics[page=1,width=0.9\textwidth]{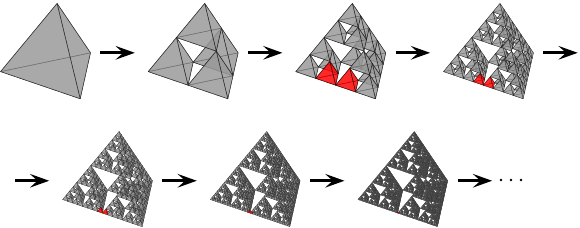}%
		\caption{The construction of the Sierpi\'nski tetrahedron, where two equivalent terrahedra are highlighted.}%
		\label{fig:ST}%
	\end{figure}%
	We want to extend this for every embedding room $\mathbb R^{N-1}$ with dimension $N-1~(N\geq 2)$ and we want to follow mainly the construction in \cite[\S 4]{DS2001}. For this, consider the points $p_1,\dots,p_N\in\mathbb R^{N-1}$. These points should generate a nondegenerate regular simplex $\Delta(p_1,\dots,p_N)\subset\mathbb R^{N-1}$. This means, that the vectors $\overline{p_1p_i}$ (with $i=2,\dots, N$) are linearly independent and the simplex is 
	\begin{equation*}
		\Delta(p_1,\dots,p_N)= \left\{x\in\mathbb R^{N-1}:~x=p_1+\sum_{i=2}^N\lambda_i\overline{p_1p_i},~\lambda_i\geq0,~\sum_{i=2}^N\lambda_i\leq 1\right\}.
	\end{equation*}
	Further we want to define the midpoint of $p_i$ and $p_j$ by $p_{i,j}\defgl \frac{p_i+p_j}2~(=p_{j,i})$. As a next step we want to define the functions of the IFS. For $1\leq k\leq N$ denote by
	\begin{equation*}
		S_{k}:\Delta(p_1,\dots,p_N)\to \Delta(p_{1,k}, \dots, p_{N,k})
	\end{equation*}
	the affine mapping onto the simplex generated by $p_{1,k},\dots,p_{N,k}$ and which satisfies $S_{k}(p_i)=p_{i,k}$. Since $S_k(p_k)=p_{k,k}=p_k$ holds, $p_k$ is a fixed point of $S_k$. For a word $w\in\mathcal W$ we define the iterations of the simplex by
	\begin{equation*}
		\Delta(w)\defgl S_w\left(\Delta(p_1,\dots, p_N)\right).
	\end{equation*}
	The Sierpi\'nski gasket (associated to $p_1,\dots, p_N$) is then defined by
	\begin{equation}
		\mathcal S\defgl \bigcap_{m=1}^\infty\bigcup_{\substack{w\in\mathcal W,\\|w|=m}} \Delta(w).\label{eq:SG-Defi}
	\end{equation}
	We can describe the topology of the Sierpi\'nski gasket by an alphabet $\mathcal A=\{1,\dots,N\}$ with $N$ letters and the corresponding word space. For the equivalence relation we fix $i$ and $j$ and observe, that
	\begin{equation*}
		S_i(p_j)=p_{j,i}=p_{i,j}=S_j(p_i)
	\end{equation*}
	holds. In particular it holds, that $S_j\cap S_i$ is non-empty. As a consequence of this it follows for $u\in\mathcal W$ and $a, b\in\mathcal A$ with $a\neq b$ and $k\geq 1$, that 
	\begin{equation*}
	uab^k\sim uba^k
	\end{equation*}
	holds.\\
	Further we want to assume, that we have a mass distribution $m$ as already introduced in section \ref{sec:preliminaries}.\\
	This allows us to check, if the Sierpi\'nski gasket fulfills assumption (B2). For this, we consider first $w=a^k$ with $a\in\mathcal A$ and $k\geq 1$. In this case assumption (B2) is trivial, since $R(w)=1$.\\
	In all other cases we can describe a word by $w=uab^k$ with $u\in\mathcal W$ and $a,b\in\mathcal A$ with $a\neq b$ and $k\geq1$. The equivalent word $\tilde w\sim w$ can be expressed by $\tilde w=uba^k$. Let us now observe, what happens with different $k$.\\
	In the case of $k=1$ we get, that $m(w)=m(u)m(a)m(b)=m(u)m(b)m(a)=m(\tilde w)$ holds. Thus the first part of (B2) is fulfilled.\\
	For $k\geq 2$ we get, that $w^-=uab^{k-1}\sim uba^{k-1}=(\tilde w)^-$ holds. Thus $w=uab^k$ fulfills the second part of (B2).\\
	In total we get, that (B2) is fulfilled for every $N$. Thus the higher-dimensional Sierpi\'nski gasket is a good example for a fractal, where we can introduce weights and, as we will see later, are able to calculate the Martin kernel and the Martin boundary.
\end{bsp}

%
%
%
%
\section{The Martin kernel}%
\label{sec:k}%
In the next step we want to define the Martin kernel. The Martin kernel is one essential part of the whole Martin boundary theory and is in some sence the regularized Green function.
\begin{defi} 
	The Martin kernel $k:\mathcal W\times\mathcal W\to \mathbb R$ is defined by
	\begin{align*}
		k(v,w)=\frac{g(v,w)}{g(\emptyset, w)},\qquad v,w\in\mathcal W.
	\end{align*}
\end{defi}

One can easily see, that we can also express the Martin kernel by $k(v,w)=\frac{g(v,w)}{m(w)}$ if we apply Theorem \ref{theo:green}. %
Before we continue, we want to examine the Martin kernel and validate some properties of $k$.

\begin{lem}
	\label{lemma:144}
	Let $v,w,\tilde w\in\mathcal W$ with $\tilde w\sim w$ and $i,j\in\mathcal A$. Then it holds:
	\begin{enumerate}[a)]
		\item\label{lemma:144:item-1} $k(v,w)=q(v,w)\frac1{\sum_{\hat v\sim v}m(v)}$ for $v\neq w$,
		\item\label{lemma:144:item-2} $k(w,\tilde wi)=\frac1{\sum_{\hat w\sim w}m(\hat w)}$,
		\item\label{lemma:144:item-3} $k(v,wi)=\frac{\sum_{\hat w\sim w} k(v,\hat w)m(\hat w)}{\sum_{\hat w\sim w}m(\hat w)}$ if $v\not\sim w$,
		\item\label{lemma:144:item-4} $k(v,wi)=k(v,wj)=k(v,\tilde wj)=k(v,\tilde wi)$,
		\item\label{lemma:144:item-5} If $w^-\sim (\tilde w)^-$ for $\tilde w\sim w$ and $w\not\sim v$ holds, then it follows:
		\begin{align*}
			k(v,w)=k(v,\tilde w).
		\end{align*}
	\end{enumerate}
\end{lem}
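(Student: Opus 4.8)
The plan is to derive all five items from part a), which is nothing more than the definition of $q$ rewritten: since $g(\emptyset,w)=m(w)$ by Theorem \ref{theo:green}, we have $k(v,w)=g(v,w)/m(w)$, and comparing this with the defining formula for $q(v,w)$ in the case $v\neq w$ gives $k(v,w)=q(v,w)\big/\sum_{\hat v\sim v}m(\hat v)$. Thus a) is immediate from the definitions together with Theorem \ref{theo:green}. The crucial structural feature of a) is that the conversion factor $\big(\sum_{\hat v\sim v}m(\hat v)\big)^{-1}$ depends only on the first argument $v$; consequently any identity among $q$-values sharing a fixed first argument transfers verbatim to the corresponding $k$-values.

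With a) in hand, b) follows by plugging in $q(w,\tilde w i)=1$ from Lemma \ref{lemma:q-Eigenschaften} b) (applicable since $\tilde w\sim w$), which yields $k(w,\tilde w i)=1\big/\sum_{\hat w\sim w}m(\hat w)$. For c) I would first expand the Green function via Corollary \ref{kor:green}, $g(v,wi)=p(w,wi)\sum_{\hat w\sim w}g(v,\hat w)$, then divide by $m(wi)$ and substitute $p(w,wi)=m(wi)\big/\sum_{\hat w\sim w}m(\hat w)$ to obtain $k(v,wi)=\big(\sum_{\hat w\sim w}g(v,\hat w)\big)\big/\sum_{\hat w\sim w}m(\hat w)$, and finally replace $g(v,\hat w)=k(v,\hat w)m(\hat w)$ (legitimate because $v\not\sim w$ forces $v\neq\hat w$ for every $\hat w\sim w$). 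This is the exact analogue of Lemma \ref{lemma:q-Eigenschaften} c); alternatively one can simply combine a) with that lemma.

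Items d) and e) are then pure translation through a). For d), each of the four quantities equals the corresponding $q$-value divided by $\sum_{\hat v\sim v}m(\hat v)$, and Lemma \ref{lemma:q-Eigenschaften} d) equates those four $q$-values (in the borderline subcase $v\sim w$ all four $q$-values equal $1$ by part b) of that lemma and transitivity of $\sim$, again giving equality). For e), a) reduces the claim to $q(v,w)=q(v,\tilde w)$, which is exactly Proposition \ref{prop:126} under the stated hypotheses $\tilde w\sim w$ and $w^-\sim(\tilde w)^-$; the extra assumption $w\not\sim v$, combined with transitivity of $\sim$, ensures $v\neq w$ and $v\neq\tilde w$ so that a) may be applied to both sides. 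The only genuinely delicate points are the bookkeeping about whether $v$ coincides with one of the cells occupying the second slot (so that the piecewise definition of $q$ and the ``$v\neq w$'' proviso of a) are invoked correctly) and the systematic split into the cases $v\sim w$ and $v\not\sim w$; beyond this there is no real obstacle.
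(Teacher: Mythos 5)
Your proposal is correct and follows essentially the same route as the paper: establish a) from the definition of $k$, Theorem \ref{theo:green} and Lemma \ref{lemma:q-Eigenschaften} a), then transfer b)--d) from the corresponding parts of Lemma \ref{lemma:q-Eigenschaften} and e) from Proposition \ref{prop:126} via the fixed conversion factor $\bigl(\sum_{\hat v\sim v}m(\hat v)\bigr)^{-1}$. Your direct expansion of $g(v,wi)$ via Corollary \ref{kor:green} in c) is exactly the computation the paper carries out, and your explicit handling of the $v\sim w$ subcase in d) matches (indeed slightly sharpens) the paper's remark.
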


\begin{proof}
	We consider for all assertions $v,w,\tilde w\in\mathcal W$ with $\tilde w\sim w$ and $i,j\in\mathcal A$.\\
	We first prove assertion \ref{lemma:144:item-1}):
	\begin{align*}
		k(v,w)&=\frac{g(v,w)}{g(\emptyset, w)}
		\intertext{and using Theorem \ref{theo:green} and Lemma \ref{lemma:q-Eigenschaften} a) it follows:}
		&=\frac{q(v,w)\frac{m(w)}{\sum_{\hat v\sim v}m(\hat v)}}{m(w)}\\
		&=q(v,w)\frac1{\sum_{\hat v\sim v}m(\hat v)}.
	\intertext{For statement \ref{lemma:144:item-2}) we use assertion \ref{lemma:144:item-1}), which we just have proven. We get:}
		k(w,\tilde wi)&=q(w,\tilde wi)\frac1{\sum_{\hat w\sim w}m(\hat w)}
		\intertext{and with Lemma \ref{lemma:q-Eigenschaften} b), which states $q(w,\tilde wi)=1$, it follows:}
		&=\frac1{\sum_{\hat w\sim w}m(\hat w)}.
	\intertext{We now take a look at assertion \ref{lemma:144:item-3}). For this, let $v\not\sim w$. Using again statement \ref{lemma:144:item-1}), it follows:}
		k(v,wi)&=q(v,wi)\frac{1}{\sum_{\hat v\sim v}m(\hat v)}
		\intertext{and using Lemma \ref{lemma:q-Eigenschaften} c) we get:}
		&=\frac{\sum_{\hat w\sim w}q(v,\hat w)m(\hat w)}{\sum_{\hat w\sim w}m(\hat w)}\frac1{\sum_{\hat v\sim v}m(\hat v)}
		\intertext{By definition of $q$ it follows:}
		&=\frac{\sum_{\hat w\sim w}\frac{g(v,\hat w)}{m(\hat w)}\sum_{\hat v\sim v}m(\hat v)m(\hat w)}{\sum_{\hat w\sim w}m(\hat w)}\frac1{\sum_{\hat v\sim v}m(\hat v)}\\
		&=\frac{\sum_{\hat w\sim w}g(v,\hat w)\cdot \sum_{\hat v\sim v}m(\hat v)}{\sum_{\hat w\sim w}m(\hat w)}\frac1{\sum_{\hat v\sim v}m(\hat v)}\\
		&=\frac{\sum_{\hat w\sim w}g(v,\hat w)}{\sum_{\hat w\sim w}m(\hat w)}
		\intertext{and with $g(v,\hat w)=k(v,\hat w)m(\hat w)$ it follows:}
		&=\frac{\sum_{\hat w\sim w}k(v,\hat w)m(\hat w)}{\sum_{\hat w\sim w}m(\hat w)}.
	\intertext{Assertion \ref{lemma:144:item-4}) follows also with statement \ref{lemma:144:item-1}):}
		k(v,wi)&=q(v,wi)\frac1{\sum_{\hat v\sim v} m(\hat v)}\\
		\intertext{We use Lemma \ref{lemma:q-Eigenschaften} d) and get:}
		&=q(v,\tilde wj)\frac1{\sum_{\hat v\sim v} m(\hat v)}\\
		\intertext{Using once more assertion \ref{lemma:144:item-1}), we get:}
		&=k(v,\tilde wj).
	\intertext{The proof of assertion \ref{lemma:144:item-5}) uses again assertion \ref{lemma:144:item-1}):}
		k(v,w)&=q(v,w)\frac1{\sum_{\hat v\sim v} m(\hat v)}\\
		\intertext{Through the preconditions we can apply Proposition \ref{prop:126} and it follows, that:}
		&=q(v,\tilde w)\frac1{\sum_{\hat v\sim v} m(\hat v)}\\\\
		&=k(v,\tilde w)
	\end{align*}
	holds.
\end{proof}

We now want to consider fractals which only fulfill our assumptions (B1) and (B2). This means that the Martin kernel can be computed in the homogenous case (for example through the work of \cite{DS2001} or \cite{LauWang2015}) and as a consequence of Theorem \ref{theorem:151} the function $q$ is independent from $m$. For example, the Sierpi\'nski gasket is such a fractal. In this case we can compute the Martin kernel through the homogenous Martin kernel.

\begin{theorem}
\label{theorem:156}
	Denote by $k_{\hom}$ the homogenous Martin kernel. Under assumption (B1) and (B2) it follows, that for $v,w\in\mathcal W$ it holds
	\begin{align*}
		k(v,w)&=
		\begin{cases}
			k_{\hom} (v,w)\frac{R(v)\cdot N^{-|v|}}{\sum_{\hat v\sim v}m(\hat v)},&\text{ for }v\neq w,\\
			\frac{1}{m(v)},&\text{ for }v=w.
		\end{cases}
	\end{align*}
\end{theorem}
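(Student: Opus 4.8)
The plan is to reduce the inhomogeneous Martin kernel to the homogeneous one using the fact, established in Theorem~\ref{theorem:151}, that the function $q$ does not depend on the mass distribution $m$. The starting point is Lemma~\ref{lemma:144}~\ref{lemma:144:item-1}), which for $v\neq w$ expresses $k(v,w)=q(v,w)\frac{1}{\sum_{\hat v\sim v}m(\hat v)}$. Since this identity holds for \emph{any} admissible mass distribution, I would apply it twice: once for the given $m$, and once for the homogeneous weight $m_{\hom}(a)=\frac1N$. In the homogeneous case $\sum_{\hat v\sim v}m_{\hom}(\hat v)=R(v)\cdot N^{-|v|}$, so $k_{\hom}(v,w)=q(v,w)\cdot\frac{1}{R(v)N^{-|v|}}$, where crucially the $q$ appearing here is the \emph{same} $q$ as in the inhomogeneous formula, by Theorem~\ref{theorem:151}.

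From these two identities one eliminates $q(v,w)$: solving the homogeneous identity gives $q(v,w)=k_{\hom}(v,w)\,R(v)N^{-|v|}$, and substituting into the inhomogeneous one yields
\begin{align*}
	k(v,w)=k_{\hom}(v,w)\,\frac{R(v)\cdot N^{-|v|}}{\sum_{\hat v\sim v}m(\hat v)},
\end{align*}
which is exactly the claimed formula for $v\neq w$. For the diagonal case $v=w$, I would compute directly from the definition of $k$: $k(v,v)=\frac{g(v,v)}{g(\emptyset,v)}=\frac{p_0(v,v)}{m(v)}=\frac{1}{m(v)}$, using $g(v,v)=p_0(v,v)=1$ and Theorem~\ref{theo:green} for the denominator.

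The only point requiring care — and the step I would flag as the substantive one — is the justification that the homogeneous Martin kernel is given by $k_{\hom}(v,w)=q(v,w)\,R(v)N^{-|v|}$ with the \emph{same} $q$. This rests on two things: first, Assumption~(B1), which guarantees $k_{\hom}$ exists at all; and second, the observation that Lemma~\ref{lemma:144} and Theorem~\ref{theorem:151} are stated for a general mass distribution satisfying~(B2), and the homogeneous weights trivially satisfy~(B2) (indeed they satisfy the first alternative, $m(w)=m(\tilde w)$ for all $\tilde w\sim w$, with equality automatic). Hence $q$ is a single universal function of $(v,w)$ depending only on the combinatorics of the IFS, and the chain of equalities above is legitimate. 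Everything else is bookkeeping with the definitions of $k$, $q$, $g$ and the normalization $\sum_{\hat v\sim v}m_{\hom}(\hat v)=R(v)N^{-|v|}$.
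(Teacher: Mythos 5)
Your proposal is correct and follows essentially the same route as the paper's proof: both use Lemma \ref{lemma:144} \ref{lemma:144:item-1}) together with the $m$-independence of $q$ from Theorem \ref{theorem:151} to identify $q(v,w)=k_{\hom}(v,w)R(v)N^{-|v|}$ and then substitute back, and both handle the diagonal case by the direct computation $g(v,v)=1$ and $g(\emptyset,v)=m(v)$. Your extra remark that the homogeneous weights themselves satisfy (B2) is a small but legitimate point of care that the paper leaves implicit.
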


\begin{proof}
	Consider the case with $v,w\in\mathcal W$ and $v\neq w$. By Lemma \ref{lemma:144} \ref{lemma:144:item-1}) we get:
	\begin{align}
		q(v,w)&=k(v,w)\sum_{\hat v\sim v}m(\hat v)\label{eq:proof:theorem:156}
		\intertext{since the function $q$ is independent from the mass distribution $m$, equation \eqref{eq:proof:theorem:156} holds for all mass distributions and the value of $q(v,w)$ won't change, if we change the mass distribution. Especially in the homogenous case with $m(\hat v)=m(v)=N^{-|v|}$ we get:}
		&=k_{\hom}(v,w)\sum_{\hat v\sim v}N^{-|\hat v|}\nonumber\\
		&=k_{\hom}(v,w)R(v)N^{-|v|}\label{eq:proof:theorem:156-2}
		\intertext{Now we can use this identity of $q$ and insert this in the general definition of $k$:}
		k(v,w)&=q(v,w)\frac{1}{\sum_{\hat v\sim v}m(\hat v)}\nonumber\\
		&=k_{\hom}(v,w)R(v)N^{-|v|}\frac{1}{\sum_{\hat v\sim v}m(\hat v)}.\nonumber\\
		\intertext{This proves the assertion in the case $v\neq w$.\newline
		We now take a short look, what happens in the case $w=v$. It holds, that}
		k(v,v)&=\frac{g(v,v)}{m(v)}\nonumber
		\intertext{and by definition of $g$ it holds, thats $g(v,v)=\delta_v(v)=1$. Thus it follows:}
		&=\frac{1}{m(v)}.\nonumber
	\end{align}
	Thus the proof of Theorem \ref{theorem:156} is completed.
\end{proof}

This Theorem is essentially the main part of this article. It allows us later to compare the homogenous case with the inhomogenous case. In order to to this, we first have to define a metric on $\mathcal W$.

\begin{defi}
	The Martin metric $\rho$ on $\mathcal W$ is defined by
	\begin{align*}
		\rho(v,w) \defgl \sum_{u\in\mathcal W}a(u)\left|k(u,v)-k(u,w)\right|\text{\qquad for }v,w\in\mathcal W
	\end{align*}
	with $a(u)>0$ for all $u\in\mathcal W$ such that $\sum_{u\in\mathcal W}\frac{a(u)}{g(\emptyset, u)}<\infty$.
\end{defi}

This is indeed a metric. The metric is non-negative, since
\begin{align*}
	\rho(v,w)&=\sum_{u\in\mathcal W}\underbrace{a(u)}_{>0}\underbrace{\left|k(u,v)-k(u,w)\right|}_{\geq 0}\geq0
\end{align*}
and is zero if and only if $v=w$. For this, consider $v=w$, then $\rho(v,w)=\rho(v,v)=\sum_{u\in\mathcal W} a(u)|k(u,v)-k(u,w)|=\sum_{u\in\mathcal W}a(u)\cdot 0=0$.\\
For the reverse conclusion consider $\rho(v,w)=0$. It follows, that $|k(u,v)-k(u,w)|=0$ for all $u\in\mathcal W$ and thus 
\begin{align}
	k(u,v)=k(u,w)\text{ for all }u\in\mathcal W\label{eq:bew:MartinMetrik}
\end{align}
must hold.\\
We assume now, that $v\neq w$. We can split this up in three cases. First we take a look at $|v|<|w|$. If we choose $u=w$, than it follows, that $k(u,v)=k(w,v)=\frac{g(w,v)}{m(v)}=0$, since $g(w,v)=0$. On the other hand it holds, that $k(u,w)=k(w,w)=\frac{g(w,w)}{m(w)}=\frac1{m(w)}\neq 0$. This contradicts equation \eqref{eq:bew:MartinMetrik}.\\
Consider now the case, where  $|v|=|w|$ holds. We choose again $u=w$ and it follows, that $k(u,v)=k(w,v)=\frac{g(w,v)}{m(v)}=\frac{\delta_w(v)}{m(v)}=0$, since by assumption $v\neq w$ holds. At the same time it holds, that $k(u,w)=k(w,w)=\frac1{m(w)}\neq0$ and we get a contradiction to equation \eqref{eq:bew:MartinMetrik}.\\
The last case is $|v|>|w|$. We can choose $u=v$ and in the same way as in case one and it follows, that $k(v,v)=k(u,v)\neq k(u,w)=k(v,w)$, which again contradicts equation \eqref{eq:bew:MartinMetrik}.\\
Further is the metric symmetric, which can be easily seen if we take a look at the definition of $\rho(v,w)$ such that $\rho(v,w)=\rho(w,v)$ holds.
\medskip

The last one is the triangle inequality. This holds, since
\begin{align*}
	\rho(v,w)&=\sum_{u\in\mathcal W}a(u)\left|k(u,v)-k(u,w)\right|\\
	&= \sum_{u\in\mathcal W}a(u)\left|k(u,v)-k(u,x)+k(u,x)-k(u,w)\right|\\
	&\leq\sum_{u\in\mathcal W}a(u)\left(\left|k(u,v)-k(u,x)\right|+\left|k(u,x)-k(u,w)\right|\right)\\
	&= \sum_{u\in\mathcal W}a(u)\left|k(u,v)-k(u,x)\right|+\sum_{u\in\mathcal W}a(u)\left|k(u,x)-k(u,w)\right|\\
	&=\rho(v,x)+\rho(x,w).
\end{align*}
Thus $\rho$ is a metric on $\mathcal W$.

\begin{rem}
	The values $a(u), u\in\mathcal W$ are not needed, but for example can be choosen to be $a(u)\defgl m(u)^{|u|+1}$, which fulfills both conditions on $a(u)$.
\end{rem}
%
%
%
%
\section{The Martin boundary}%
\label{sec:MB}%
We now want to take a look at the Martin boundary. For this, we need to define the completion of $\mathcal W$. We want to examine this in a very precise way to get a precise result.\\
As a first step we want to devote ourself to Cauchy sequences in $\mathcal W$. For this, a sequence $\{w_n\}\subset \mathcal W$ with $|w_n|\to\infty$ is a $\rho $-Cauchy sequence if and only if 
\begin{align*}
	\lim_{n\to\infty} k(v,w_n)\text{ exists for all }v\in\mathcal W. 
\end{align*}
We want to denote the set of all $\rho$-Cauchy sequences by $\widehat{\mathcal W} \defgl \big\{\{w_n\}\subset \mathcal W: \{w_n\}$ is a $\rho$-Cauchy sequence$\big\}$ and we can define a equivalence relation $\eqsim$ on $\widehat{\mathcal W}$ by
\begin{align*}
	\{v_n\}\eqsim\{w_n\} \quad\text{ if and only if }\quad \lim_{n\to\infty} k(u,v_n)=\lim_{n\to\infty} k(u,w_n)\text{ for all }u\in\mathcal W.
\end{align*}%
The equivalence class of $\{w_n\}\in\widehat{\mathcal W}$ will be denoted by $\eqclass{\{w_n\}}$.
It then holds, that the space $\overline{\mathcal W}=\widehat{\mathcal W}\big\slash_{\!\!\eqsim}$ is the collection of all equivalence classes of Cauchy sequences of $\mathcal W$ and is the $\rho$-completion of $\mathcal W$. This space is called the Martin space and is a compact, metric space. We will denote the metric on $\overline{\mathcal W}$ still by $\rho$. The set
\begin{align*}
	\mathcal M\defgl\overline{\mathcal W}\backslash \mathcal W
\end{align*}
is called Martin boundary, which is also a compact metric space, since $\mathcal W$ is open in $\overline{\mathcal W}$. %
For fixed $v\in\mathcal W$ every function $w\mapsto k(v,w)$ can be extended to a continuous function on $\mathcal M$, which we want to denote by $k(v,\cdot)$. For this, let $\xi\in\eqclass{\{w_n\}}\in \mathcal M$ and define
\begin{align*}
	k(v,\xi)=\lim_{n\to\infty}k(v, w_n)\text{\qquad for }v\in\mathcal W.
\end{align*}

As a last point we want to examine the Martin boundary $\mathcal M$ in the inhomogenous case. We want to compare this with the homogenous case and for this, we need to distinguish between the two cases. Therefore we want to denote by $\mathcal M_{\hom}$ the Martin boundary in the homogenous case and in the same way $k_{\hom}(v,w)$, $\rho_{\text{hom}}(v,w)$, $\widehat{\mathcal W}_{\hom}$, $\eqsim_{\hom}$, $\eqclass{\{w_n\}}_{\hom}$ and $\overline{\mathcal W}_{\hom}$. Of course, all properties of the Martin boundary are still valid in the homogenous case.\\
As a preparation of Theorem \ref{theo:MBgleich} we show some useful statements. The first one is about the word space followed by a statement about $\rho$-Cauchy sequences and the equivalence relation.

\begin{kor}%
	\label{kor:Wordspace}%
	The word space $\mathcal W$ is equal to $\mathcal W_{\hom}$.
\end{kor}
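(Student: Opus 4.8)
The plan is to unwind the definitions and observe that nothing entering the construction of the word space depends on the mass distribution $m$. The word space $\mathcal W=\bigcup_{n\geq 1}\mathcal A^n\cup\{\emptyset\}$ is built solely from the alphabet $\mathcal A=\{1,\dots,N\}$, and $\mathcal A$ is determined by the number $N$ of maps in the IFS $\{S_1,\dots,S_N\}$, a datum which is identical in the homogenous and inhomogenous settings: the weights enter only through $m$, never through the $S_i$ themselves. Hence, as sets, $\mathcal W=\mathcal W_{\hom}$, and this is exactly what the Corollary asserts.

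First I would make explicit that the equivalence relation $\sim$ of Definition \ref{defi:sim} is likewise independent of $m$: it is phrased entirely in terms of $|v|$, $|w|$, the parents $v^-,w^-$, and the geometric condition $S_v(K)\cap S_w(K)\neq\emptyset$, none of which involves the weights. Consequently $R(v)$ and the notion of \emph{child} (a word $w=\hat v i$ with $\hat v\sim v$ and $i\in\mathcal A$) agree in the two cases.

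Next, although the Corollary is stated only at the level of sets, it is worth recording for later use that the combinatorial skeleton attached to $\mathcal W$ also coincides. Since $m(a)\in(0,1)$ for every $a\in\mathcal A$, the formula in Definition \ref{def:p} gives $p(v,w)>0$ precisely when $w$ is a child of $v$, the same condition as for the homogenous transition probability. Iterating, $p_n(v,w)>0$ if and only if the homogenous $n$-step probability is positive, whence $g(v,w)>0$ iff $g_{\hom}(v,w)>0$; so the ancestor relation $\ll$, the sets $\Anc_k(w)$ and $\Anc(w)$ are identical in both settings.

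The proof carries no real obstacle: the only point to be careful about is to distinguish ``equal as sets (and as combinatorial structures)'', which is what holds here, from ``equal as metric spaces'', which is the separate content of Theorem \ref{theo:MBgleich} and is not claimed at this stage. So the write-up is essentially a short remark identifying which ingredients of the setup depend on $m$ and which do not.
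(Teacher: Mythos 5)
Your proposal is correct and matches the paper's own (very short) proof: both simply observe that $\mathcal W$ is built from the alphabet $\mathcal A$ alone and hence does not depend on the transition probability or the mass distribution $m$. The extra remarks about $\sim$, $R(v)$, and the ancestor structure are harmless additions but not needed for the statement.
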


\begin{proof}
	This is in fact very easy to see, since the definition of $\mathcal W$ does not depend on the transition probability $p$ and therefore not on the mass distribution $m$.
\end{proof}

\begin{lem}%
	\label{lem:RaumCFGleich}
	Under assumption (B1) and (B2) every $\rho$-Cauchy sequence $\{w_n\}$ is a $\rho_{\hom}$-Cauchy sequence and vice versa. This implies, that $\widehat{\mathcal W}=\widehat{\mathcal W}_{\hom}$ holds.
\end{lem}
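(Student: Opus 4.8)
The plan is to exploit the explicit formula relating the two Martin kernels provided by Theorem \ref{theorem:156}. For $v \neq w$ we have
\begin{align*}
	k(v,w) = k_{\hom}(v,w)\,\frac{R(v)\cdot N^{-|v|}}{\sum_{\hat v\sim v} m(\hat v)},
\end{align*}
and the crucial observation is that the factor $c(v) \defgl \frac{R(v)\cdot N^{-|v|}}{\sum_{\hat v\sim v} m(\hat v)}$ depends only on $v$, not on $w$. So fix $v\in\mathcal W$. For any sequence $\{w_n\}$ with $|w_n|\to\infty$, eventually $w_n\neq v$, and hence $k(v,w_n) = c(v)\,k_{\hom}(v,w_n)$ for all large $n$. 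Since $c(v)$ is a fixed positive constant, the sequence $\bigl(k(v,w_n)\bigr)_n$ converges if and only if $\bigl(k_{\hom}(v,w_n)\bigr)_n$ converges.

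First I would recall the characterization stated just before the lemma: a sequence $\{w_n\}\subset\mathcal W$ with $|w_n|\to\infty$ is a $\rho$-Cauchy sequence if and only if $\lim_{n\to\infty} k(v,w_n)$ exists for every $v\in\mathcal W$, and likewise $\{w_n\}$ is $\rho_{\hom}$-Cauchy if and only if $\lim_{n\to\infty} k_{\hom}(v,w_n)$ exists for every $v\in\mathcal W$. (One should also note, or briefly dispatch, the case of sequences not tending to infinity in length — such sequences are eventually constant in the relevant sense, or one restricts attention to the genuinely boundary-bound sequences as the surrounding text does; in any case the word space $\mathcal W$ itself is the same by Corollary \ref{kor:Wordspace}, so the only issue is the Cauchy condition.) Then I would combine this characterization with the equivalence ``$\lim k(v,w_n)$ exists $\iff$ $\lim k_{\hom}(v,w_n)$ exists'' established above, quantified over all $v\in\mathcal W$. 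This gives immediately that $\{w_n\}$ is $\rho$-Cauchy iff it is $\rho_{\hom}$-Cauchy, hence $\widehat{\mathcal W} = \widehat{\mathcal W}_{\hom}$.

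I do not expect a serious obstacle here; the lemma is essentially a direct corollary of Theorem \ref{theorem:156} once one notices the $w$-independence of the conversion factor. The only points requiring a little care are: (i) ensuring the argument is applied for each fixed $v$ separately, so that the constant $c(v)$ — which does vary with $v$ — causes no trouble, since convergence for a fixed scaling constant is unaffected; (ii) handling the finitely many indices $n$ with $w_n = v$, which is harmless; and (iii) making sure assumption (B1) is invoked so that $k_{\hom}$ (and thus the right-hand side of the Theorem \ref{theorem:156} formula) is actually defined. With these minor caveats addressed, the proof is short: state the characterization, apply Theorem \ref{theorem:156}, observe the constant factor, conclude.
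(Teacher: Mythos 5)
Your proposal is correct and matches the paper's own argument: the paper likewise fixes $u\in\mathcal W$, uses $|w_n|\to\infty$ to ensure $w_n\neq u$ for $n\geq n_0$, and applies Theorem \ref{theorem:156} to transfer convergence of $k(u,w_n)$ to $k_{\hom}(u,w_n)$ via the $w$-independent constant, with the converse by symmetry. No further comment is needed.
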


\begin{proof}
	Consider a $\rho$-Cauchy sequence $\{w_n\}\in\widehat{\mathcal W}$. Since $|w_n|\to\infty$ there exists a $n_0\in\N$ such that $|w_n|>|u|$ holds for all $n\geq n_0$. For $k(u,w_n)$ it follows with Theorem \ref{theorem:156}, that
	\begin{align*}
		k_{\hom}(u, w_n)&= \frac{\sum_{\hat u\sim u} m(\hat u)}{R(u)\cdot N^{-|u|}} k(u, w_n)
		\intertext{holds for all $n\geq n_0$. So it follows, that}
		\lim_{n\to\infty} k_{\hom}(u,w_n)&=\lim_{\substack{n\to\infty,\\n\geq n_0}} \frac{R(u)\cdot N^{-|u|}}{\sum_{\hat u\sim u} m(u)} k(u, w_n)\\
		&=\frac{R(u)\cdot N^{-|u|}}{\sum_{\hat u\sim u} m(u)} \lim_{\substack{n\to\infty\\n\geq n_0}}k(u,w_n)\text{\qquad exists for all }u\in\mathcal W,
	\end{align*}
	since $\lim_{n\to\infty} k(u,w_n)$ exists for all $u\in\mathcal W$. For this reason $\{w_n\}\in\widehat{\mathcal W}_{\hom}$.\\
	The other way round uses the same argument and completes the proof.
\end{proof}

\begin{lem}%
	\label{lem:ER-ident}
	Under assumption (B1) and (B2) the equivalence relations $\eqsim$ and $\eqsim_{\hom}$ are identical.
\end{lem}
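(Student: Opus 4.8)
The plan is to exploit the pointwise linear relation between $k$ and $k_{\hom}$ provided by Theorem~\ref{theorem:156}, exactly as in the proof of Lemma~\ref{lem:RaumCFGleich}. The two equivalence relations $\eqsim$ and $\eqsim_{\hom}$ live on the same set $\widehat{\mathcal W}=\widehat{\mathcal W}_{\hom}$ by Corollary~\ref{kor:Wordspace} and Lemma~\ref{lem:RaumCFGleich}, so it suffices to show that for $\{v_n\},\{w_n\}\in\widehat{\mathcal W}$ we have $\{v_n\}\eqsim\{w_n\}$ if and only if $\{v_n\}\eqsim_{\hom}\{w_n\}$.

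First I would fix an arbitrary $u\in\mathcal W$ and recall that, since $|v_n|,|w_n|\to\infty$, there is $n_0$ with $v_n\neq u$ and $w_n\neq u$ for all $n\geq n_0$. For such $n$, Theorem~\ref{theorem:156} gives the identity
\begin{align*}
	k_{\hom}(u,v_n)=\frac{\sum_{\hat u\sim u}m(\hat u)}{R(u)\cdot N^{-|u|}}\,k(u,v_n),
\end{align*}
and the same with $v_n$ replaced by $w_n$. The crucial point is that the constant $C_u\defgl\frac{\sum_{\hat u\sim u}m(\hat u)}{R(u)\cdot N^{-|u|}}$ depends only on $u$ (and is strictly positive, since $m(\hat u)>0$ for all $\hat u$), so it does not affect whether the two limits agree.

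Then I would simply compute: $\{v_n\}\eqsim\{w_n\}$ means $\lim_n k(u,v_n)=\lim_n k(u,w_n)$ for every $u\in\mathcal W$; multiplying both sides by the positive constant $C_u$ and using the displayed identity (valid for $n\geq n_0$, which does not change the limit) yields $\lim_n k_{\hom}(u,v_n)=\lim_n k_{\hom}(u,w_n)$ for every $u$, i.e.\ $\{v_n\}\eqsim_{\hom}\{w_n\}$. The converse is identical, dividing by $C_u$ instead of multiplying. Hence $\eqsim$ and $\eqsim_{\hom}$ coincide as subsets of $\widehat{\mathcal W}\times\widehat{\mathcal W}$.

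There is no real obstacle here; the only thing to be slightly careful about is the bookkeeping around $n_0$ — the identity from Theorem~\ref{theorem:156} only holds once $v_n\neq u$, but since limits ignore finitely many terms this causes no difficulty — and the observation that $C_u>0$ so that equality of limits is genuinely preserved in both directions. This is essentially a restatement of the argument already used for Lemma~\ref{lem:RaumCFGleich}, applied now to pairs of sequences rather than single sequences.
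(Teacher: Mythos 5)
Your proposal is correct and follows essentially the same route as the paper: both proofs use Theorem \ref{theorem:156} to relate $k(u,\cdot)$ and $k_{\hom}(u,\cdot)$ by a positive constant depending only on $u$, discard finitely many initial terms via an index $n_0$, and conclude that equality of limits is preserved in both directions. No gaps.
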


\begin{proof}
	Let $\{v_n\}\in\eqclass{\{w_n\}}\in\widehat{\mathcal W}$. Since $\{v_n\}\eqsim\{w_n\}$ it follows for all $u\in\mathcal W$, that 
	\begin{align}
		\lim_{n\to\infty}k(u,v_n)&=\lim_{n\to\infty}k(u,w_n)\nonumber
		\intertext{holds. Since $|w_n|\to\infty$ and $|v_n|\to\infty$ there exists a $n_0\in\N$ such that $|w_n|>|u|$ and $|v_n|>|u|$ holds for all $n\geq n_0$. It then follows, that }
		\frac{R(u)\cdot N^{-|u|}}{\sum_{\hat u\sim u} m(u)}\lim_{\substack{n\to\infty,\\n\geq n_0}}k_{\hom}(u,v_n)&=\frac{R(u)\cdot N^{-|u|}}{\sum_{\hat u\sim u} m(u)}\lim_{\substack{n\to\infty,\\n\geq n_0}}k_{\hom}(u,w_n)\nonumber
		\intertext{which we can reduce to}
		\lim_{\substack{n\to\infty,\\n\geq n_0}}k_{\hom}(u,v_n)&=\lim_{\substack{n\to\infty,\\n\geq n_0}}k_{\hom}(u,w_n)\nonumber\label{eq:proof:lemmaRel}
	\end{align}
	and we get, that $\{v_n\}\eqsim_{\hom}\{w_n\}$ respectively $\{v_n\}\in\eqclass{\{w_n\}}_{\hom}$ holds.\\
	With the same argument we can show, that $\{v_n\}\in\eqclass{\{w_n\}}_{\hom}$ implies $\{v_n\}\in\eqclass{\{w_n\}}$.
	Overall it follows, that $\eqclass{\{w_n\}}=\eqclass{\{w_n\}}_{\hom}$ holds for all $\{w_n\}\in\widehat{\mathcal W}~(=\widehat{\mathcal W}_{\hom})$.
\end{proof}

All three statements are needed to show our main result.

\begin{theorem}%
	\label{theo:MBgleich}
	The inhomogenous Martin boundary coincides with the homogenous Martin boundary, i.e. $\mathcal M = \mathcal M_{\hom}$.
\end{theorem}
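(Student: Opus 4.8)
The plan is to assemble the Martin boundary $\mathcal M$ directly from its definition as $\overline{\mathcal W}\setminus\mathcal W$ and match each ingredient against the homogenous analogue. Recall that $\overline{\mathcal W}=\widehat{\mathcal W}\big\slash_{\!\!\eqsim}$ is built from three pieces of data: the underlying set $\mathcal W$, the family $\widehat{\mathcal W}$ of $\rho$-Cauchy sequences, and the equivalence relation $\eqsim$ identifying sequences with the same boundary limits. The three preparatory statements just proven handle exactly these three pieces: Corollary \ref{kor:Wordspace} gives $\mathcal W=\mathcal W_{\hom}$, Lemma \ref{lem:RaumCFGleich} gives $\widehat{\mathcal W}=\widehat{\mathcal W}_{\hom}$, and Lemma \ref{lem:ER-ident} gives that $\eqsim$ and $\eqsim_{\hom}$ are the same relation on this common set. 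So the first step is simply to invoke these three results to conclude that the quotient sets $\overline{\mathcal W}$ and $\overline{\mathcal W}_{\hom}$ coincide as sets, and hence $\mathcal M=\overline{\mathcal W}\setminus\mathcal W$ equals $\mathcal M_{\hom}=\overline{\mathcal W}_{\hom}\setminus\mathcal W_{\hom}$ as sets.

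Next I would address the topology, since \enquote{coincides} should mean as topological (indeed metric) spaces, not merely as sets. The metrics $\rho$ and $\rho_{\hom}$ on $\mathcal W$ are different, so the point is that they induce the same topology on the completion. The clean way: by Theorem \ref{theorem:156}, for each fixed $v\in\mathcal W$ and each $w\neq v$ we have $k(v,w)=c_v\,k_{\hom}(v,w)$ with the positive constant $c_v=\dfrac{R(v)\cdot N^{-|v|}}{\sum_{\hat v\sim v}m(\hat v)}$ depending only on $v$, and for a boundary point $\xi\in\mathcal M$ the extended kernels satisfy $k(v,\xi)=c_v\,k_{\hom}(v,\xi)$ by passing to the limit along a representing Cauchy sequence (the limit exists by Lemma \ref{lem:RaumCFGleich}, and the identification of the two boundary-point sets is Lemma \ref{lem:ER-ident}). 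Thus a sequence $\xi_n\to\xi$ in $(\overline{\mathcal W},\rho)$ iff $k(v,\xi_n)\to k(v,\xi)$ for every $v$ iff $k_{\hom}(v,\xi_n)\to k_{\hom}(v,\xi)$ for every $v$ iff $\xi_n\to\xi$ in $(\overline{\mathcal W},\rho_{\hom})$, using that the $k(v,\cdot)$ (resp. $k_{\hom}(v,\cdot)$) separate points and that convergence in the Martin metric is exactly pointwise-in-$v$ convergence of kernels. Hence the identity map $\overline{\mathcal W}\to\overline{\mathcal W}_{\hom}$ is a homeomorphism, and restricting to the boundary gives the homeomorphism $\mathcal M\cong\mathcal M_{\hom}$.

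Finally I would note that the three cited lemmas already silently use assumptions (B1) and (B2), so the theorem inherits those hypotheses; (B1) in particular guarantees $k_{\hom}$, and therefore $\mathcal M_{\hom}$, exists in the first place, and one may additionally combine with Remark \ref{rem:HomogenerFall} (via \cite{LauWang2015}) to identify $\mathcal M_{\hom}$ — and hence $\mathcal M$ — with the fractal $K$.

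I do not expect a serious obstacle here: the substantive work is already done in Theorem \ref{theorem:156} and the three preparatory results, and this proof is a short bookkeeping argument. The only point requiring a little care is the topology step — making precise that equality of the underlying sets plus the proportionality $k=c_v k_{\hom}$ forces equality of the induced metric topologies, rather than just set equality. One must be slightly careful that $c_v$ is a genuine constant in $w$ (it is, since it depends only on $v$) so that multiplying by it commutes with the limit defining $k(v,\xi)$; once that is observed, the equivalence of the two notions of sequential convergence on the (metrizable, hence sequential) Martin space is immediate.
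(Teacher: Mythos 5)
Your proof is correct and follows essentially the same route as the paper: the paper's proof is exactly your first paragraph, deducing $\mathcal M=\mathcal M_{\hom}$ as sets by combining Corollary \ref{kor:Wordspace}, Lemma \ref{lem:RaumCFGleich} and Lemma \ref{lem:ER-ident} inside the quotient construction $\overline{\mathcal W}=\widehat{\mathcal W}\big\slash_{\!\!\eqsim}$. Your second paragraph, upgrading the set equality to a homeomorphism via the proportionality $k(v,\cdot)=c_v\,k_{\hom}(v,\cdot)$ from Theorem \ref{theorem:156}, goes beyond what the paper itself establishes (its proof stops at set equality) and is a sound, worthwhile refinement.
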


\begin{proof}
	We take a look at the inhomogenous Martin boundary $\mathcal M$. By definition it follows, that
	\begin{align*}
		\mathcal M&=\overline{\mathcal W}\backslash W = \left(\widehat{\mathcal W}\big\slash_{\!\!\eqsim}\right)\backslash\mathcal W
		\intertext{holds. Using Lemma \ref{lem:RaumCFGleich}, we get:}
		&= \left(\widehat{\mathcal W}_{\hom}\big\slash_{\!\!\eqsim}\right)\backslash\mathcal W
		\intertext{Now we can apply Lemma \ref{lem:ER-ident} and it follows, that}
		&= \left(\widehat{\mathcal W}_{\hom}\big\slash_{\!\!\eqsim_{\hom}}\right)\backslash\mathcal W\\
		&=\overline{\mathcal W}_{\hom}\slash \mathcal W
		\intertext{holds. With Corollary \ref{kor:Wordspace} we get}
		&=\overline{\mathcal W}_{\hom}\slash \mathcal W_{\hom}=\mathcal M_{\hom}
	\end{align*}
	which proves the theorem.
\end{proof}

Finally we can compare the inhomogenous Martin boundary with the attractor $K$ of the IFS.

\begin{kor}
	It holds, that
	\begin{equation}
		K\cong \mathcal W^\star\!\big\slash_{\!\!\sim}\cong \mathcal M_{\hom}= \mathcal M.\nonumber
	\end{equation}
\end{kor}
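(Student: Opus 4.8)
The plan is to read the displayed chain as three separate links and then assemble them. The last link, the equality $\mathcal M=\mathcal M_{\hom}$, is precisely Theorem~\ref{theo:MBgleich}, so nothing remains to be done there. For the link $\mathcal M_{\hom}\cong K$ I would invoke Remark~\ref{rem:HomogenerFall}: in the homogenous case $p(v,w)=\frac{1}{N\cdot R(v)}$ on children, all of (LW1)--(LW5) hold, hence the Markov chain is of DS-type and \cite[Theorem~1.2]{LauWang2015} yields a homeomorphism between $\mathcal M_{\hom}$ and the self-similar set $K$. Thus the only link still requiring an argument is the coding homeomorphism $K\cong\mathcal W^\star/{\sim}$.

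For that link I would use the standard address map. Define $\pi\colon\mathcal W^\star\to K$ by $\pi(w)\defgl\bigcap_{n\geq1}S_{w|_n}(K)$. The cells $S_{w|_n}(K)$ are nonempty, compact and decreasing in $n$, with $\operatorname{diam}S_{w|_n}(K)\leq(\max_i c_i)^n\operatorname{diam}K\to0$, so the intersection is a single point and $\pi$ is well defined. It is continuous, since if two infinite words agree on their first $n$ letters then their images lie in one common cell of diameter $<\varepsilon$ once $n$ is large; and it is surjective, by a König's-lemma argument applied to the (finitely branching, infinite) subtree $\{w\in\mathcal W:x\in S_w(K)\}$, which is infinite because $K=\bigcup_{|w|=n}S_w(K)$ for every $n$. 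Now $\mathcal W^\star$ with the product topology is compact, so the quotient $\mathcal W^\star/{\sim}$ is compact — here Assumption (A) is exactly what guarantees that $\sim$ is an equivalence relation and that the quotient makes sense — while $K$ is Hausdorff; therefore it suffices to check that the fibres of $\pi$ coincide with the $\sim$-classes. If $v\sim w$ in $\mathcal W^\star$, then $S_{v|_n}(K)\cap S_{w|_n}(K)\neq\emptyset$ for all large $n$, and these nested nonempty compact sets force $\pi(v)=\pi(w)$. Conversely, if $\pi(v)=\pi(w)=:x$ with $v\neq w$, let $m$ be the first index where $v$ and $w$ differ; then for every $n\geq m+1$ one has $(v|_n)^-=v|_{n-1}\neq w|_{n-1}=(w|_n)^-$ and $x\in S_{v|_n}(K)\cap S_{w|_n}(K)$, hence $v|_n\sim w|_n$, i.e. $v\sim w$. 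So $\pi$ descends to a continuous bijection $\overline\pi\colon\mathcal W^\star/{\sim}\to K$, and a continuous bijection from a compact space onto a Hausdorff space is a homeomorphism.

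Chaining the three links gives $K\cong\mathcal W^\star/{\sim}\cong\mathcal M_{\hom}=\mathcal M$, which is the claim. The step carrying the real content is the middle one, and within it the only delicate point is the fibre computation; this is also where Assumption (A) is indispensable, since by Remark~\ref{rem:gegbsp} and Remark~\ref{rem:SC} the neighbour relation may fail to be transitive for a general IFS, in which case $\mathcal W^\star/{\sim}$ would not even be a legitimate quotient. Under (A) — which, in the nested case, holds automatically by Proposition~\ref{prop:nestedER} — the argument goes through without further effort, and Corollary~\ref{cor:R(v)} ($R(v)<\infty$) is what keeps the homogenous transition probabilities strictly positive so that Remark~\ref{rem:HomogenerFall} applies.
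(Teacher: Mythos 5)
Your proposal is correct, and its skeleton coincides with the paper's: the paper's own proof is only two sentences long --- Remark \ref{rem:HomogenerFall} shows the homogenous chain is of DS-type, so \cite[Theorem 1.2]{LauWang2015} yields $K\cong \mathcal W^\star\!\big\slash_{\!\!\sim}\cong\mathcal M_{\hom}$ in one stroke, and Theorem \ref{theo:MBgleich} supplies $\mathcal M_{\hom}=\mathcal M$. Where you diverge is that you extract only $\mathcal M_{\hom}\cong K$ from the citation and prove the coding homeomorphism $K\cong\mathcal W^\star\!\big\slash_{\!\!\sim}$ yourself via the address map $\pi(w)=\bigcap_{n}S_{w|_n}(K)$, whereas the paper reads that link off \cite[Theorem 1.2]{LauWang2015} as well. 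Your additional argument is sound: well-definedness and surjectivity of $\pi$ are standard, and the fibre computation is the one delicate point --- you handle the parent clause $v^-\neq w^-$ of Definition \ref{defi:sim} correctly by starting the comparison at $n\geq m+1$ past the first differing letter, so that $(v|_n)^-\neq(w|_n)^-$ is automatic, and the compact-to-Hausdorff argument then closes the loop. What your version buys is a corollary that leans on the external reference only for the genuinely Martin-theoretic identification $\mathcal W^\star\!\big\slash_{\!\!\sim}\cong\mathcal M_{\hom}$, and it checks explicitly --- which the paper never does --- that the particular relation $\sim$ of Definition \ref{defi:sim} induces exactly the address identification on $\mathcal W^\star$; the paper's version is shorter but entirely dependent on the cited theorem. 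One minor caveat, inherited from Remark \ref{rem:HomogenerFall} rather than introduced by you: your closing remark credits Corollary \ref{cor:R(v)} ($R(v)<\infty$ pointwise) with securing (LW4), but (LW4) is a uniform positivity condition and really needs $\sup_{v}R(v)<\infty$; this is what the underlying OSC result of \cite{BK1991} actually provides, so nothing breaks, but the attribution is slightly imprecise.
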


\begin{proof}
	We observered in Remark \ref{rem:HomogenerFall}, that the Markov chain in the homogenous case is of DS-type and therefore fulfills (LW1) - (LW5) from \cite{LauWang2015}. With \cite[Theorem 1.2]{LauWang2015} it follows, that 
	\begin{equation}
	 K\cong \mathcal W^\star\!\big\slash_{\!\!\sim}\cong \mathcal M_{\hom}\nonumber
	\end{equation}
	holds.\\
	The second part follows with Theorem \ref{theo:MBgleich}.
\end{proof}

%
%
%
%
\section{The minimal Martin boundary}
\label{sec:minMB}
In this last section we want to investigate the minimal Martin boundary, also known as space of exits. In a first step we prove, that the function $v\mapsto k(v,\xi)$ is $P$-harmonic. For this, we prove the following helpful Lemma: 
\begin{lem}%
	\label{lem:k(v,w)=k(vi,w)}%
	For any $v,w\in\mathcal W$ it holds that
	\begin{equation*}
		k(v,w)=\sum_{u\in\mathcal W}p(v,u)k(u,w).
	\end{equation*}
\end{lem}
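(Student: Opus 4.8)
The plan is to reduce the identity for the Martin kernel to the analogous identity for the Green function that was already established in Corollary \ref{kor:green}. Recall that $k(v,w) = g(v,w)/g(\emptyset,w)$, and that $g(\emptyset,w)$ does not depend on the first argument, so dividing the Green-function identity through by $g(\emptyset,w)$ should give exactly what we want. First I would observe that if $|v| \geq |w|$ then both sides are easy: either $v$ is not an ancestor of $w$ and everything vanishes, or $v = w$ and one checks the single term $p(w,w)$ directly; the interesting case is $|v| < |w|$.

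So assume $|v| < |w|$. The key step is to apply Corollary \ref{kor:green}, which states
\begin{equation*}
	g(v,w) = \sum_{u \sim w^-} g(v,u)\,p(u,w).
\end{equation*}
I want to turn the sum over $u \sim w^-$ into a sum over all $u \in \mathcal W$ of the form $p(v,u)k(u,w)$. The natural move is to rewrite $p(u,w)$ using Corollary \ref{kor:pphat}: since $p(u,w) = p(w^-,w)$ for all $u \sim w^-$ (and $p(u,w) = 0$ otherwise, because $w$ is a child of $v'$ only if $v' \sim w^-$), the sum $\sum_{u\sim w^-} g(v,u)p(u,w)$ is really $\sum_{u\in\mathcal W} g(v,u)p(u,w)$ with most terms zero. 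Hmm — but that has $p(u,w)$, not $p(v,u)$, so this is the wrong shape. Let me instead expand $g(v,u)$ one step forward: by Lemma \ref{lemma:DS2001-2.3} applied with $k = 1$ (i.e. the one-step decomposition $g(v,u) = \sum_{x: d(v,x)=1,\, v\ll x\ll u} g(v,x)g(x,u) = \sum_x p(v,x)g(x,u)$, valid when $|u|>|v|$), we can write $g(v,w)$ itself directly via a one-step decomposition: $g(v,w) = \sum_{x\in\mathcal W} p(v,x)\,g(x,w)$, where the sum ranges over the children $x$ of (representatives of) $v$ and all other terms vanish. Dividing by $g(\emptyset,w)$ gives
\begin{equation*}
	k(v,w) = \frac{g(v,w)}{g(\emptyset,w)} = \sum_{x\in\mathcal W} p(v,x)\,\frac{g(x,w)}{g(\emptyset,w)} = \sum_{x\in\mathcal W} p(v,x)\,k(x,w),
\end{equation*}
which is exactly the claim.

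The main obstacle is making sure the one-step decomposition $g(v,w) = \sum_{x} p(v,x)g(x,w)$ is correctly justified: Lemma \ref{lemma:DS2001-2.3} is stated for $1 \le k \le d(v,w)$, so I need $d(v,w)\ge 1$, i.e. $|v|<|w|$, which is why the boundary cases $|v|\ge|w|$ are split off first. I should also double-check the degenerate cases carefully — when $p(v,x)>0$ the word $x$ has length $|v|+1$, and $g(x,w) = 0$ unless $x \ll w$, so the infinite sum over $\mathcal W$ is really a finite sum and there is no convergence issue. One more point worth verifying: in the case $v = w$, the right-hand side is $\sum_u p(w,u)k(u,w)$; every $u$ with $p(w,u)>0$ has $|u| = |w|+1 > |w|$, so $g(u,w) = 0$ and hence $k(u,w) = 0$, making the right-hand side $0$ — but $k(w,w) = 1/m(w) \neq 0$, so the identity would fail. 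Hence the lemma as stated must implicitly intend $|v| < |w|$ (or $v \ll w$ with $v\neq w$), and I would state that restriction explicitly, or note that for $|v|\ge |w|$ with $v\neq w$ both sides are $0$ and the only genuinely excluded case is $v=w$.
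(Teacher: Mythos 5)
Your proof is correct and takes essentially the same route as the paper: apply Lemma \ref{lemma:DS2001-2.3} with $l=1$ to get the one-step decomposition $g(v,w)=\sum_{u}p(v,u)g(u,w)$ and divide by $g(\emptyset,w)$. Your additional observation about the case $v=w$ is a genuine catch the paper overlooks --- there the left side is $1/m(w)$ while the right side vanishes, so the lemma really does require $v\neq w$ (equivalently $d(v,w)\geq 1$ so that Lemma \ref{lemma:DS2001-2.3} applies), and noting this restriction explicitly is an improvement over the paper's proof.
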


\begin{proof}
	Let $v,w\in\mathcal W$. By definition of the Martin kernel $k$ we get
	\begin{align*}
		k(v,w)&=\frac{1}{g(\emptyset, w)}g(v,w)
		\intertext{We can apply now Lemma \ref{lemma:DS2001-2.3} with $1\leq l\leq d(v,w)$:}
		&=\frac1{g(\emptyset,w)}\sum_{\substack{d(v,u)=l\\v\ll u\ll w}}g(v,u)g(u,w)
		\intertext{We choose $l=1$ and observe, that in this case $g(v, u)=p(v,u)$ holds. This leads to:}
		&=\frac1{g(\emptyset,w)}\sum_{u\in\mathcal W}p(v,u)g(u,w)\\
		&=\sum_{u\in\mathcal W}p(v,u)k(u,w).
	\end{align*}
\end{proof}

\begin{prop}
	\label{prop:201}
	The function $v\mapsto k_\xi(v) \defgl k(v,\xi)$ is $P$-harmonic for every $\xi\in \mathcal M$.
\end{prop}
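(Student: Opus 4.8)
The plan is to show the $P$-harmonicity of $k_\xi$ by starting from the finite-level identity of Lemma~\ref{lem:k(v,w)=k(vi,w)} and passing to the limit along a Cauchy sequence representing $\xi$. First I would fix $\xi \in \mathcal M$ and pick a representative $\{w_n\} \subset \mathcal W$ with $[\{w_n\}] = \xi$, so that by definition $k(u,\xi) = \lim_{n\to\infty} k(u,w_n)$ for every $u \in \mathcal W$. Applying Lemma~\ref{lem:k(v,w)=k(vi,w)} with $w = w_n$ gives
\begin{align*}
	k(v,w_n) = \sum_{u\in\mathcal W} p(v,u)\, k(u,w_n)
\end{align*}
for every $n$. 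Since $p(v,\cdot)$ is supported on the finitely many children $u = \hat v i$ with $\hat v \sim v$, $i \in \mathcal A$ (there are at most $N \cdot R(v) < \infty$ of them by Corollary~\ref{cor:R(v)}), the sum on the right is \emph{finite}, so I may interchange the limit with the sum without any convergence subtleties:
\begin{align*}
	k(v,\xi) = \lim_{n\to\infty} k(v,w_n) = \lim_{n\to\infty}\sum_{u\in\mathcal W} p(v,u)\, k(u,w_n) = \sum_{u\in\mathcal W} p(v,u)\lim_{n\to\infty} k(u,w_n) = \sum_{u\in\mathcal W} p(v,u)\, k(u,\xi).
\end{align*}
That is exactly $(Pk_\xi)(v) = k_\xi(v)$, which is the claim.

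The only point that needs a word of care is that the right-hand side is genuinely a finite sum. This is where I would invoke that $p(v,u) > 0$ forces $u$ to be a child of some $\hat v \sim v$, together with $R(v) < \infty$ (Corollary~\ref{cor:R(v)}), so that the index set $\{u : p(v,u) > 0\}$ is finite and each $k(u,w_n) \to k(u,\xi)$ by the defining property of the equivalence class. I expect this finiteness remark to be the only real obstacle — everything else is a one-line limit interchange — and it is mild, since it is built into the structure of $p$ and already used implicitly throughout Section~\ref{sec:p}. One could alternatively note that the limit defining $k(v,\xi)$ exists precisely because $\{w_n\}$ is a $\rho$-Cauchy sequence, so no separate justification of convergence of the left-hand side is needed. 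Writing this out cleanly completes the proof of the proposition.
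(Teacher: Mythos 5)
Your proposal is correct and follows essentially the same route as the paper's own proof: both rest on the identity of Lemma~\ref{lem:k(v,w)=k(vi,w)} applied at each $w_n$, followed by an interchange of limit and sum justified by the finiteness of $\{u : p(v,u)>0\}$ via Corollary~\ref{cor:R(v)}. The only cosmetic difference is direction — the paper starts from $(Pk_\xi)(v)$ and unwinds it, while you start from $k(v,w_n)$ and pass to the limit — but the substance is identical.
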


\begin{proof}
	Consider $v\in\mathcal W$ and $\xi=\eqclass{(w_n)}\in \mathcal M$. It holds:
	\begin{align*}
		(Pk_\xi)(v)&=\sum_{u\in\mathcal W}p(v,u)k_\xi(u)\\
		&=\sum_{u\in\mathcal W}p(v,u)\lim_{n\to\infty}k(u, w_n)\\
		\intertext{$p(v,u)$ is only positive for $u=\tilde vi$ and $\tilde v\sim v, i\in\mathcal A$. Further are only finite summands positive since by Corollary \ref{cor:R(v)} $R(v)<\infty$ and all limits exists. Thus we can interchange limes and summation:}
		&=\lim_{n\to\infty}\sum_{u\in\mathcal W}p(v,u)k(u, w_n)\\
		\intertext{We can now apply Lemma \ref{lem:k(v,w)=k(vi,w)}:}
		&=\lim_{n\to\infty} k(v, w_n)%
		=k_\xi(v)
	\end{align*}
	Thus, the function $k_\xi(\cdot)$ is $P$-harmonic.
\end{proof}

We now want to take a look at the minimal Martin boundary. For this, we recall the Poisson-Martin integral representation, which is one of the nice properties of the Martin boundary. Any non-negative harmonic function $h$ on $\mathcal W$ can be described by
\begin{equation}
	h(\cdot)=\int_{\mathcal M} k(\cdot, y)\mathrm d\mu_h(y)\label{eq:PM-integral}
\end{equation}
with a measure $\mu_h$ on $\mathcal M$, called spectral measure of $h$, which may not be unique.\\
Further the mapping onto the Martin kernel $v\mapsto k_\xi(v)$ (for a fixed $\xi$) can be expressed by \eqref{eq:PM-integral}, since $k_\xi(\cdot)$ is by Proposition \ref{prop:201} harmonic (and non-negative). For shortness we want to denote the spectal measure of $k_\xi(\cdot)$ by $\mu_\xi$.

\begin{defi}
	The minimal Martin boundary $\mathcal M_{\mathrm{min}}$ is defined to be
	\begin{equation*}
		\mathcal M_{\mathrm{min}}\defgl \{\xi\in \mathcal M: \mu_\xi=\delta_\xi\},
	\end{equation*}
	where $\delta_\xi$ is the point mass measure at $\xi$.
\end{defi}
The main purpose of the minimal Martin boundary is, that the spectal measure in \eqref{eq:PM-integral} can be chosen to be supported in $\mathcal M_{\mathrm{min}}$ and is unique. For further informations see for example \cite{Dynkin1969, Woess2009}.

\begin{theorem}
	\label{theorem:202}
	The minimal Martin boundary $\mathcal M_{\mathrm{min}}$ coinsides with the Martin boundary $\mathcal M$.
\end{theorem}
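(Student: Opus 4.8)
The plan is to transport the entire structure from the homogeneous case, in the spirit of Theorem \ref{theorem:156} and Theorem \ref{theo:MBgleich}. Write $\Psi(v)\defgl\frac{R(v)\,N^{-|v|}}{\sum_{\hat v\sim v}m(\hat v)}$, a strictly positive function on $\mathcal W$. The first step is to upgrade Theorem \ref{theorem:156} to the Martin boundary: for every $v\in\mathcal W$ and $\xi\in\mathcal M$ one has $k(v,\xi)=\Psi(v)\,k_{\hom}(v,\xi)$. Indeed, identifying $\mathcal M$ with $\mathcal M_{\hom}$ via Theorem \ref{theo:MBgleich} and choosing a representing $\rho$-Cauchy sequence $\{w_n\}$ for $\xi$, we have $|w_n|>|v|$ (hence $w_n\neq v$) for all large $n$, so $k(v,w_n)=\Psi(v)k_{\hom}(v,w_n)$ by Theorem \ref{theorem:156}; letting $n\to\infty$ gives the identity, and at $\xi$ it persists because both Martin kernels extend continuously to $\mathcal M$.

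Next I would prove that multiplication by $\Psi$ is a linear, order-preserving bijection from the cone $\mathcal H^{+}_{\hom}$ of non-negative $P_{\hom}$-harmonic functions on $\mathcal W$ onto the cone $\mathcal H^{+}$ of non-negative $P$-harmonic functions, with inverse given by multiplication by $1/\Psi$. Linearity, injectivity and order-preservation are immediate since $\Psi>0$; the substantial point is that $\Psi h\in\mathcal H^{+}$ whenever $h\in\mathcal H^{+}_{\hom}$. This cannot be checked by a pointwise identity, since $p(v,w)\Psi(w)\neq\Psi(v)p_{\hom}(v,w)$ in general; instead I would invoke the Poisson–Martin representation in the homogeneous case, $h=\int_{\mathcal M}k_{\hom}(\cdot,\eta)\,\mathrm d\mu_h(\eta)$, so that $\Psi h=\int_{\mathcal M}k(\cdot,\eta)\,\mathrm d\mu_h(\eta)$. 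Each $k(\cdot,\eta)$ is $P$-harmonic by Proposition \ref{prop:201}, and $p(v,\cdot)$ is finitely supported by Corollary \ref{cor:R(v)}, so $P$ may be interchanged with the integral to conclude $P(\Psi h)=\Psi h$. The reverse inclusion is symmetric, applying the Poisson–Martin representation for $P$ and multiplying by $1/\Psi$.

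A linear order-isomorphism of cones maps extreme rays to extreme rays. By the classical description of the minimal Martin boundary (cf. \cite{Dynkin1969, Woess2009}), $\xi\in\mathcal M_{\mathrm{min}}$ exactly when $k(\cdot,\xi)$ spans an extreme ray of $\mathcal H^{+}$ (equivalently, is a minimal $P$-harmonic function), and similarly $\xi\in\mathcal M_{\hom,\mathrm{min}}$ exactly when $k_{\hom}(\cdot,\xi)$ spans an extreme ray of $\mathcal H^{+}_{\hom}$. Since $k(\cdot,\xi)=\Psi\cdot k_{\hom}(\cdot,\xi)$, these two sets of boundary points coincide, i.e.\ $\mathcal M_{\mathrm{min}}=\mathcal M_{\hom,\mathrm{min}}$. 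Finally, in the homogeneous case the Markov chain is of DS-type (Remark \ref{rem:HomogenerFall}), and the homogeneous Martin boundary is entirely minimal, $\mathcal M_{\hom,\mathrm{min}}=\mathcal M_{\hom}$, by the work of Denker–Sato and Lau–Wang \cite{DS2001, LauWang2015}. Combining this with Theorem \ref{theo:MBgleich} gives $\mathcal M_{\mathrm{min}}=\mathcal M_{\hom,\mathrm{min}}=\mathcal M_{\hom}=\mathcal M$.

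The main obstacle is the second step, the transfer of harmonicity: because $P$ and $P_{\hom}$ are not related by a clean Doob $h$-transform at the level of the transition weights, one has to deduce that $\Psi h$ is again harmonic from the integral representation rather than from a local computation. A secondary point is the availability of minimality of the homogeneous boundary; if the cited sources do not state it in the precise generality needed, one would supply the standard argument for the DS-type chain — for instance via the identification of $P_{\hom}$-harmonic functions with Kigami's harmonic functions, or a direct Harnack/$0$–$1$-law estimate along ancestor chains.
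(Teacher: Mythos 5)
Your proof is correct in outline, but it takes a genuinely different route from the paper. The paper argues directly with the spectral measure: for $\xi\in\mathcal M$ it writes $\mathcal M\setminus\{\xi\}$ as the countable union, over those $u\in\mathcal W$ with $k(u,\xi)=0$, of the sets $\{\zeta\in\mathcal M: k(u,\zeta)>0\}$ (using $\mathcal M\cong\mathcal W^\star\!\big\slash_{\!\!\sim}$ to produce, for each $\zeta\neq\xi$, a word $u$ that is an ancestor of $\zeta$ but not of $\xi$), and then kills each such set by noting that $0=k_\xi(u)=\int_{\mathcal M}k(u,y)\,\mathrm d\mu_\xi(y)$ with a non-negative integrand forces $\mu_\xi(\{\zeta: k(u,\zeta)>0\})=0$; countable subadditivity then gives $\mu_\xi=\delta_\xi$. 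You instead transport minimality from the homogeneous chain via the multiplier $\Psi$, through an order-isomorphism of the two cones of non-negative harmonic functions. Your route is sound --- in particular you correctly observe that $P$ and $P_{\hom}$ are not linked by a pointwise $h$-transform, so harmonicity of $\Psi h$ must come from the integral representation together with Proposition \ref{prop:201}, and the cone isomorphism $h\mapsto\Psi h$ is a worthwhile by-product that the paper does not record. What you pay for it is (i) a dependence on the literature for $\mathcal M_{\hom,\mathrm{min}}=\mathcal M_{\hom}$, and (ii) the unproved equivalence between the paper's definition of minimality (every spectral measure of $k_\xi$ equals $\delta_\xi$) and the extreme-ray characterization you invoke; both are standard, but the second deserves a line of justification precisely because $\mu_\xi$ is a priori non-unique. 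Note finally that the fallback you sketch for (i) is essentially the paper's own two-step argument, which applies verbatim to the inhomogeneous chain --- so the detour through the homogeneous case, while instructive, is avoidable.
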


\begin{proof}
	The proof is similar to the proof in \cite{LauWang2015}. Since we modified it slightly, we want to include it.\\
	Our aim is to prove, that $\mu_\xi$ has point mass in $\xi$ for every $\xi\in \mathcal M$.\\
	For this, let $\xi\in \mathcal M$. In a first step we prove
	\begin{equation}
		\mathcal M\backslash\{\xi\} = \bigcup_{u\in\mathcal W : k(u,\xi)=0} \{\zeta\in \mathcal M : k(u,\zeta)>0\}.\label{eq:proof:202}
	\end{equation}
	The inclusion $\supseteq$ is quite simple. For $\zeta\in\bigcup_{u\in\mathcal W : k(u,\xi)=0} \{\zeta\in \mathcal M : k(u,\zeta)>0\}$ it holds, that $\zeta\in \mathcal M$. Suppose, that $\zeta=\xi$. It follows that a $u\in\mathcal W$ exists with $0<k(u,\zeta)=k(u,\xi)=0$. A contradiction and we conclude, that $\zeta\in \mathcal M\backslash\{\xi\}$ holds.\\
	For the other way round consider $\zeta\in \mathcal M\backslash\{\xi\}$. Since $\mathcal M$ is homeomorphic to $\mathcal W^\star\big\slash_\sim$, we can choose $v,w\in\mathcal W^\star$ such that $\{v|_n\}\to\xi$ and $\{w|_n\}\to\zeta$. By assumption it holds, that $\zeta\neq\xi$ and because of this, there exists a $k\in\mathbb N$ such that $v|_k\not\sim w|_k$ holds. The index $k$ marks in this case, where the two words begin to differ from each other.\\
	We can choose $u$ to be $u:=w|_k$. It then holds, that $u\in\mathcal A(w)$, but $u\notin\mathcal A(v)$. It follows, that $k(u,\zeta)>0$ and $k(u,\xi)=0$ and $\zeta$ is part of the right hand side of \eqref{eq:proof:202}.\\
	As a second observation we note, that $\mu_\xi(\{\zeta\in \mathcal M: k(u, \zeta)>0\})=0$ holds for all $u\in\mathcal W$ with $k(u,\xi)=0$. This follows from \eqref{eq:PM-integral}, where
	\begin{equation*}
		0=k_\xi(u)=\int_{\mathcal M} k(u,y)\mathrm d\mu_\xi(y)
	\end{equation*}
	holds and $k(u,y)$ is non-negative for all $u$ and $y$.\\
	In total we get:
	\begin{equation*}
		\mu_\xi(\mathcal M\backslash\{\xi\})=\sum_{u\in\mathcal W:k(u,\xi)=0}\mu_\xi(\{\zeta:k(u,\zeta)>0\})=0
	\end{equation*}
	and thus $\mu_\xi$ is a point mass at $\xi$.
\end{proof}
%
%
%
%
This result is somehow surprising, since one could expect, that the mass distribution changes the Martin boundary. On the other hand describes the Martin boundary mainly the topology of the fractal. The mass distribution has no influence on the topology except the degenerated case with $m(a)=0$ for $a\in\mathcal A$. We excluded this case from the beginning, since we could describe such a fractal using an alphabet with one letter less.
%
%
%
%
\bibliography{Artikel}

\newcommand{\etalchar}[1]{$^{#1}$}
\begin{thebibliography}{JLW12}

\bibitem[BK91]{BK1991}
Christoph Bandt and Karsten Keller.
\newblock {S}elf-{S}imilar {S}ets 2. {A} {S[}imple {A}pproach to the
  {T}opological {S}tructure of {F}ractals.
\newblock {\em Mathematische Nachrichten}, 154(1):27--39, 1991.

\bibitem[Doo59]{Doob1959}
J.~L. Doob.
\newblock Discrete {P}otential {T}heory and {B}oundaries.
\newblock {\em Journal of Mathematics and Mechanics}, 8(3):433--458, 1959.

\bibitem[Doo84]{Doob1984}
J.~L. Doob.
\newblock {\em Classical potential theory and its probabilistic counterpart}.
\newblock Die Grundlehren der mathematischen Wissenschaften ; 262. Springer,
  New York; Berlin; Heidelberg [u.a.], 1984.

\bibitem[DS99]{DS1999}
M.~Denker and H.~Sato.
\newblock Sierpi\'nski {G}asket {A}s a {M}artin {B}oundary {II}: {T}he
  {I}ntrinsic {M}etric.
\newblock {\em Publ. Res. Inst. Math. Sci.}, 35(5):769--794, December 1999.

\bibitem[DS01]{DS2001}
M.~Denker and H.~Sato.
\newblock Sierpi{\'{n}}ski {G}asket as a {M}artin {B}oundary {I}: {M}artin
  {K}ernels; {D}edicated to {P}rofessor {M}asatoshi {F}ukushima on the occasion
  of his 60th birthday.
\newblock {\em Potential Analysis}, 14(3):211--232, May 2001.

\bibitem[DS02]{DS2002}
M.~Denker and H.~Sato.
\newblock {R}eflections on {H}armonic {A}nalysis of the {S}ierpi\'nski
  {G}asket.
\newblock {\em Mathematische Nachrichten}, 241(1):32--55, 2002.

\bibitem[Dyn69]{Dynkin1969}
E.~B. Dynkin.
\newblock Boundary theory of {M}arkov processes (the discrete case).
\newblock {\em Russian Mathematical Surveys}, 24(2):1, 1969.

\bibitem[Fal90]{FalconerFG}
K.~Falconer.
\newblock {\em Fractal geometry: mathematical foundations and applications}.
\newblock Wiley, Chichester [u.a.], 1990.

\bibitem[Fal97]{Falconer1997}
K.~Falconer.
\newblock Techniques in fractal geometry.
\newblock 1997.

\bibitem[FK]{FK2019}
U.~Freiberg and S.~Kohl.
\newblock Observations on the influence of the {H}ausdorff dimension of the
  intersection of cells on the neighborhood.
\newblock in preparation.

\bibitem[H{\etalchar{+}}60]{Hunt1960}
G.~A. Hunt et~al.
\newblock Markoff chains and {M}artin boundaries.
\newblock {\em Illinois Journal of Mathematics}, 4(3):313--340, 1960.

\bibitem[Ham00]{Hambly2000}
B.~M. Hambly.
\newblock Heat kernels and spectral asymptotics for some random sierpinski
  gaskets.
\newblock In Christoph Bandt, Siegfried Graf, and Martina Z{\"a}hle, editors,
  {\em Fractal Geometry and Stochastics II}, pages 239--267, Basel, 2000.
  Birkh{\"a}user Basel.

\bibitem[Hut81]{Hutchinson1981}
J.~Hutchinson.
\newblock Fractals and self similarity.
\newblock {\em Indiana University Mathematics Journal}, 30(5):713--747, 1981.

\bibitem[JLW12]{JuLauWang2012}
H.~Ju, K.-S. Lau, and X.-Y. Wang.
\newblock Post-critically {F}inite {F}ractal and {M}artin boundary.
\newblock {\em Transactions of the American Mathematical Society},
  364(1):103--118, 2012.

\bibitem[Kig93]{Kigami1993}
J.~Kigami.
\newblock Harmonic calculus on pcf self-similar sets.
\newblock {\em Transactions of the American Mathematical Society},
  335(2):721--755, 1993.

\bibitem[Kig01]{Kigami2001}
J.~Kigami.
\newblock {\em Analysis on fractals}, volume 143.
\newblock Cambridge University Press, 2001.

\bibitem[KSK76]{KSK1976}
J.~G. Kemeny, J.~L. Snell, and A.~W. Knapp.
\newblock {\em Denumerable {M}arkov chains}.
\newblock Springer-Verlag New York, 2d. ed. edition, 1976.

\bibitem[KSS17]{KSS2017}
M.~{Kesseb{\"o}hmer}, T.~{Samuel}, and K.~{Sender}.
\newblock The sierpi\'nski gasket as the {M}artin boundary of a non-isotropic
  {M}arkov chain.
\newblock {\em ArXiv e-prints}, oct 2017.

\bibitem[LN12]{LauNgai2012}
K.-S. Lau and S.-M. Ngai.
\newblock Martin boundary and exit space on the {S}ierpinski gasket.
\newblock {\em Science China Mathematics}, 55(3):475--494, Mar 2012.

\bibitem[LW15]{LauWang2015}
K.-S. Lau and X.-Y. Wang.
\newblock Denker--{S}ato type {M}arkov chains on self-similar sets.
\newblock {\em Mathematische Zeitschrift}, 280(1):401--420, Jun 2015.

\bibitem[Mar41]{Martin1941}
R.~S Martin.
\newblock Minimal positive harmonic functions.
\newblock {\em Transactions of the American Mathematical Society},
  49(1):137--172, 1941.

\bibitem[Saw97]{Sawyer1997}
S.~Sawyer.
\newblock Martin boundaries and random walks.
\newblock Jan. 1997.

\bibitem[Woe00]{Woess2000}
Wolfgang Woess.
\newblock {\em Random {W}alks on {I}nfinite {G}raphs and {G}roups}.
\newblock Cambridge Tracts in Mathematics. Cambridge University Press, 2000.

\bibitem[WS09]{Woess2009}
W.~Woess and European~Mathematical Society.
\newblock {\em Denumerable {M}arkov {C}hains: {G}enerating {F}unctions,
  {B}oundary {T}heory, {R}andom {W}alks on {T}rees}.
\newblock EMS textbooks in mathematics. European Mathematical Society, 2009.

\end{thebibliography}
\bibliographystyle{alpha}

\end{document}